\newcommand{\Rmnum}[1]{\expandafter\@slowromancap\romannumeral #1@}
\begin{document}

\newtheorem{theorem}{Theorem}
\newtheorem{observation}[theorem]{Observation}
\newtheorem{corollary}{Corollary}
\newtheorem{algorithm}[theorem]{Algorithm}
\newtheorem{definition}{Definition}
\newtheorem{guess}[theorem]{Conjecture}
\newtheorem{claim}{Claim}
\newtheorem{problem}[theorem]{Problem}
\newtheorem{question}[theorem]{Question}
\newtheorem{lemma}{Lemma}
\newtheorem{proposition}[theorem]{Proposition}
\newtheorem{fact}[theorem]{Fact}

\makeatletter
  \newcommand\figcaption{\def\@captype{figure}\caption}
  \newcommand\tabcaption{\def\@captype{table}\caption}
\makeatother

\newtheorem{acknowledgement}[theorem]{Acknowledgement}

\newtheorem{axiom}[theorem]{Axiom}
\newtheorem{case}[theorem]{Case}
\newtheorem{conclusion}[theorem]{Conclusion}
\newtheorem{condition}[theorem]{Condition}
\newtheorem{conjecture}[theorem]{Conjecture}
\newtheorem{criterion}[theorem]{Criterion}
\newtheorem{example}[theorem]{Example}
\newtheorem{exercise}[theorem]{Exercise}
\newtheorem{notation}[theorem]{Notation}
\newtheorem{solution}[theorem]{Solution}
\newtheorem{summary}[theorem]{Summary}

\newenvironment{proof}{\noindent {\bf
Proof.}}{\rule{3mm}{3mm}\par\medskip}
\newcommand{\remark}{\medskip\par\noindent {\bf Remark.~~}}
\newcommand{\pp}{{\it p.}}
\newcommand{\de}{\em}
\newcommand{\mad}{\rm mad}
\newcommand{\qf}{Q({\cal F},s)}
\newcommand{\qff}{Q({\cal F}',s)}
\newcommand{\qfff}{Q({\cal F}'',s)}
\newcommand{\f}{{\cal F}}
\newcommand{\ff}{{\cal F}'}
\newcommand{\fff}{{\cal F}''}
\newcommand{\fs}{{\cal F},s}
\newcommand{\s}{\mathcal{S}}
\newcommand{\G}{\Gamma}
\newcommand{\g}{\gamma}
\newcommand{\wrt}{with respect to }

\newcommand{\qq}{\uppercase\expandafter{\romannumeral1}}
\newcommand{\qqq}{\uppercase\expandafter{\romannumeral2}}

\newcommand{\qed}{\hfill\rule{0.5em}{0.809em}}

\title{List colouring triangle free planar graphs}

\author{ Jianzhang Hu \and Xuding Zhu\thanks{Department of Mathematics, Zhejiang Normal University,  China.  E-mail: xudingzhu@gmail.com. Grant Numbers: NSFC 11571319  and 111 project of Ministry of Education of China.}}

\maketitle

\begin{abstract}
	This paper proves   the following result: Assume $G$ is a triangle free planar graph, $X$ is an independent set of $G$. If $L$ is a list assignment of $G$ such that $\mid L(v)\mid = 4$  for each vertex $v \in V(G)-X$ and  $\mid L(v)\mid = 3$  for each vertex $v \in X$, then $G$ is $L$-colourable.
\end{abstract}
\section{Introduction}
\label{sec-intro}
Given a graph $G$ and a function $f$ from $V(G)$ to $N$. An {\em   $f$-list assignment} of $G$ is a mapping $L$ which assigns to each vertex $v$ of $G$ a set $L(v)$ of $f(v)$ integers as available colours. Given a list assignment $L$ of $G$, an {\em $L$-colouring} of $G$ is a mapping $\phi: V(G)\rightarrow N $ such that $\phi(v)\in L(v)$ for each vertex $v$ and $\phi(x)\neq \phi(y)$ for each  edge $xy$. We say $G$ is $L$-colourable if there exists an $L$-colouring of $G$. A {\em $k$-list assignment} of $G$ is an $f$-list assignment of $G$ with $f(v)=k$ for all $v \in V(G)$. We say $G$ is  {\em $k$-choosable} if $G$ is $L$-colourable for all $k$-list assignment $L$ of $G$.

List colouring of graphs was introduced in the 1970's by Vizing \cite{Viz1976}  and independently by Erd\H{o}s, Rubin and Taylor \cite{ERT1979}. Thomassen  showed that every planar graph is 5-choosable \cite{Tho1994}, and   every planar graph with girth at least $5$ is $3$-choosable \cite{Tho2000}. A classical result of
Gr\"{o}tzsch \cite{Gro1958} says that every triangle free planar graph   is 3-colourable.  However, Voigt \cite{Voi1995} constructed  a triangle free planar graph   which is not 3-choosable.
Kratochv\'{i}l and Tuza
\cite{KT1994} observed that every  triangle free planar graph is 4-choosable. A natural question is how far can a triangle free planar graph from being $3$-choosable? To make this question more precise, we propose the following conjecture.

\begin{conjecture}
	\label{conj1}
	Assume $G$ is a triangle free planar graph, $X$ is a subset of $V(G)$ such that $G[X]$ is a bipartite graph, and $L$ is a list assignment of $G$ which assigns to each vertex in   $X$ a set of $3$ permissible colours and assigns to each other vertex  a set of  $4$ permissible colours. Then $G$ is $L$-colourable.
\end{conjecture}

It is known that the conjecture is true if $G$ itself is bipartite, i.e., bipartite triangle free planar graphs are $3$-choosable \cite{Alon1992}.
In this paper, as a support of Conjecture \ref{conj1}, we prove the following result:

\begin{theorem}
	\label{theorem1}
	Assume $G$ is a triangle free planar graph and $X$ is an independent set of $G$. If $L$ is a list assignment of $G$ such that  $|L(v)|=3$ for $v \in X$ and $|L(v)| = 4$  for   $v \in V(G)-X$, then   $G$ is $L$-colourable.
\end{theorem}

For a plane graph $G$, denote by $B(G)$ the boundary of the infinite face of $G$. The vertices of $B(G)$ are cyclically ordered. For $v \in B(G)$, denote by $v^-,v^+$ the  neighbors of $v$ that are preceding and succeeding  $v$ along  the clockwise cyclic order, respectively.

Theorem \ref{theorem1} is a consequence of the following more technical result.

\begin{definition}
	\label{def-target}
	A {\em target} is a  triple $(G,P,L)$ such that $G$ is a triangle free plane graph,  $P=p_1p_2\ldots p_k$ is a path consisting of   $k \le 5$ consecutive vertices of $B(G)$ and $L$ is a list assignment of $G$ such that the   following hold:
	\begin{itemize}
		\item  $ 3\le  |L(v)| \le 4$  for    $v\in V(G)- B(G)$.
		\item  The set $X_{G,L} = \{x \in V(G)-B(G): |L(x)| =3\} $ is an independent set.
		\item $2 \le |L(v)| \le 4 $  for   $v\in B(G) - V(P)$.
		\item $ |L(v)| =1$  for   $v\in  V(P)$ and if $u, v \in P$, and $uv \in E(G)-E(P)$, then $L(u) \ne L(v)$. (For $uv \in E(P)$, we do not require $L(u) \ne L(v)$).
	\end{itemize}
\end{definition}
\begin{definition}
	\label{def-valid}
	Assume $(G,P,L)$ is a target.
	\begin{itemize}
		\item A vertex $u$ of $G$ is   bad  if either $|P|=5$   and
$|N_G(u)\cap P| \ge |L(u)|-1$, or 
 $3\le |P| \le 4$, $|L(u)|=2$ and  $u$ is adjacent to an end vertex of $P$.
\item An edge $e=xy$ is bad if $|L(x)| = |L(y)|=2$.
\item A  4-cycle $C=xyzw$ is bad if (1) $w$ is an interior vertex and $x,y,z$ are    consecutive boundary vertices; (2) $|L(x)|=|L(w)|=|L(z)|=3$ and $|L(y)|=2$.
\end{itemize}
A target $(G,P,L)$ is {\em valid} if there is no bad edge, no bad vertex and no bad $4$-cycle.
\end{definition}


\begin{theorem}
	\label{theorem2}
	If $(G,P,L)$ is a valid target, then
	$G-E(P)$ is $L$-colourable.
\end{theorem}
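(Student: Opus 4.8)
The plan is to argue by induction on $|V(G)|$ (or, more precisely, on a carefully chosen weight function such as $|V(G)| + |E(G)|$ or $\sum_v (something)$, to make the reductions strictly decrease), following the Thomassen-style discharging-free induction that is standard for such list-colouring results on planar graphs with a precoloured/constrained boundary path. The base case is when $G$ is small, e.g. $|V(G)| = |V(P)|$ or $G = P$, where $G - E(P)$ has no edges and colourability is trivial. For the inductive step, I would first dispose of several easy structural configurations, each of which either reduces to a smaller valid target or splits $G$ into two smaller valid targets glued along a short path, the key being always to verify that validity (no bad edge, bad vertex, bad $4$-cycle, and the independence of $S_{G,L}$) is preserved or can be restored in the smaller instances.

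The main steps I anticipate are: (1) reduce to the case where $G$ is $2$-connected and $B(G)$ is a cycle — if $G$ is disconnected or has a cutvertex, colour the pieces separately, putting $P$ (or its relevant part) in one piece and extending lists appropriately; (2) handle chords of $B(G)$ and separating short cycles (length $\le 6$, since $G$ is triangle-free the first nontrivial separating cycles have length $4$, $5$, or $6$) by splitting along them — colour the "inside" with the outside colouring as a boundary constraint, which is where validity of the induced targets must be checked delicately; (3) when $B(G)$ is an induced cycle with no separating short cycle, analyse a vertex near the far end of the path $P$ or a vertex of small degree on $B(G)$: delete it or identify two of its neighbours, reducing the list of its neighbours accordingly, and check that the resulting target is valid. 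The roles of "bad vertex", "bad edge", and "bad $4$-cycle" in Definition~\ref{def-valid} are precisely the obstructions that would otherwise prevent a colouring of the boundary from extending inward, so these case hypotheses are exactly what one needs to push each reduction through; Observation~\ref{ob1} will be used repeatedly to discharge the bad-vertex conditions whenever $|P| \le 4$ or a neighbour list has size $4$.

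The hard part will be step (2)–(3): controlling the list sizes after a reduction so that the three defining inequalities ($3 \le |L(v)| \le 4$ off the boundary, $2 \le |L(v)| \le 4$ on $B(G) - V(P)$) and, crucially, the independence of $S_{G,L}$ together with the absence of bad $4$-cycles are all maintained. In particular, identifying two neighbours of a deleted boundary vertex can create a new vertex with a size-$3$ list adjacent to another such vertex, or can create a new short $4$-cycle with the forbidden list-size pattern; avoiding this will require a careful choice of which configuration to reduce and may force a finite case analysis on the structure of $B(G)$ near $P$ (lengths of faces incident to $p_1$ and $p_k$, the neighbourhoods of $p_1^-$ and $p_k^+$, etc.). I expect the proof to proceed through a sequence of claims of the form "a valid target contains no configuration $X_i$", culminating in a final contradiction with Euler's formula / a sparseness count for triangle-free plane graphs with a bounded-length boundary path.
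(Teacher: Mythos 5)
Your outline does match the broad framework of the paper's proof (take a minimal counterexample, show it is $2$-connected, split along chords and short separating cycles, then delete or identify vertices near the boundary while checking that validity is preserved), but as written it is a plan rather than a proof, and the parts it leaves unspecified are exactly where all the difficulty lies. Concretely, the paper has to introduce the notion of a \emph{feasible} $i$-chord $W$ together with an augmented precoloured path $P_W$ (which absorbs the boundary neighbours $u^-,v^+$ when they carry $2$-lists), and the workhorse is the quantitative statement that every feasible chord has $|P_W|\ge 5$ (Lemma~\ref{lemma4}); from this one must rule out, one by one, $(1,2)$- and $(1,1)$-$2$-chords and all chords of $B(G)$, and prove that every $(3^+,3)$-$2$-chord cuts off a semi-fan (Lemma~\ref{lemma-semifan}). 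That last lemma is not an inductive reduction at all: it is settled by exhibiting an explicit orientation and invoking the Alon--Tarsi theorem, an ingredient your plan has no counterpart for. None of these steps, nor the verification that the various induced targets have no bad vertex/edge/$4$-cycle (which occupies most of the paper, including the entire separate section proving Lemma~\ref{clm-no4cycle}), is carried out or even identified in your proposal.

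Moreover, your anticipated endgame --- ``a final contradiction with Euler's formula / a sparseness count for triangle-free plane graphs'' --- is not how this kind of argument can close and is not how the paper closes. Since vertices on $B(G)\setminus V(P)$ are allowed lists of size $2$, no global density count is available; the paper instead finishes with a colouring-extension case analysis on the boundary vertices $v_1,v_2,v_3$ following $p_1$, in which one colours or deletes these vertices, sometimes identifies $v_1$ with $v_3$, and then must show the reduced triple is again a valid target (the hardest point being the absence of bad $4$-cycles after the reduction). So beyond being incomplete, the proposal's final step would fail as stated, and the decisive portions of the argument are missing.
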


\section{Some properties of a minimum counterexample}
\label{sec-proof}
Assume Theorem \ref{theorem2} is not true and $(G,P,L)$ is a counterexample such that
\begin{itemize}
	\item[(1)] $|V(G)|$ is minimum.
	\item[(2)]  Subject to (1), $\sum_{v \in V(G)}|L(v)|$ is minimum.
\end{itemize}

We say a vertex $v$ is {\em pre-coloured}  if $|L(v)|=1$.   For technical reasons, we do not require the vertices of $P$ be properly coloured, so the final colouring of $V(G)$ is a proper colouring of $G -E(P)$. However, when we pre-colour a vertex $v$ in the proof, we always colour $v$  by a colour not used by any of its coloured neighbours.    We say $L'$ is obtained from $L$ by {\em pre-colouring $v$ } to mean that
$L'(v)=\{c\}$ for some   colour $c \in L(v)$ and no pre-coloured neighbour of $v$ is coloured by $c$.

  \begin{lemma}
  	\label{lemmaldv}
  	For any vertex $v$ of $G$, $|L(v)| \le d_G(v)$ .
  \end{lemma}
  Otherwise, by the minimality of $G$, $G-v-E(P)$ has a proper $L$-colouring, which can be extended to a proper $L$-colouring of $G-E(P)$ by colouring $v$ with a colour from $L(v)$ not used by its neighbours.

\begin{lemma}
	\label{lemma2conn}
	$G$ is 2-connected.
\end{lemma}
\begin{proof}
	Assume to the contrary that $G$ has a cut vertex $v$. Let $G_{1}$ and $G_{2}$ be the two induced subgraphs  of $G$ with $V(G_1) \cap V(G_{  2}) = \{v\}$ and $V(G_{ 1}) \cup V(G_{ 2})=V(G)$. If $v\in V(P)$, then by the minimality of $G$ there exist   $L$-colourings of $G_{1}-E(P)$ and $G_{2}-E(P)$, whose union is an $L$-colouring of $G-E(P)$,  a contradiction.
	
	Assume $P\subseteq G_{1}$.  By the minimality of $G$ there is an $L$-colouring $\phi$ of $G_{1}$.

	Let $L_2$ be the restriction of $L$ to $G_{ 2}$, except that $v$ is pre-coloured, and let $P'=v$. Then  $(G_{ 2}, P', L_2)$ has no bad vertex, no bad edge and no bad $4$-cycle, and $X_{G_2,L_2} \subseteq X_{G,L}$. So $(G_{ 2}, P', L_2)$ is a valid target and $G_{ 2}$ has an $L_2$-colouring
	$\psi$. The union of $\phi$ and $\psi$ is an $L$-colouring of $G-E(P)$.
\end{proof}

In the remainder of the proof, we usually need to verify that
some triple $(G',P',L')$ is a valid target, where $G'$ is a subgraph of $G$ and $L'$ is a slight modification of the restriction of $L$ to $G'$ and $P'$ is the set of pre-coloured vertices in $G'$. It is usually obvious  that $X_{G',L'} \subseteq X_{G,L}$, and hence $X_{G',L'}$ is an independent set in $G'$. The other conditions for $(G',P',L')$ to be a target are also trivially satisfied.  We shall only need to prove that $(G',P',L')$  has no bad $4$-cycle, no bad edge and no bad vertex.

For a cycle $C$ of $G$, we denote by ${\rm int}[C]$ the subgraph of $G$ induced by vertices on $C$ and  in the interior of $C$. The subgraph ${\rm ext}[C]$ is defined similarly. We denote by ${\rm int}(C)$ the subgraph of $G$ induced by vertices in the interior of $C$.

\begin{lemma}
	\label{lemmasep45}
	$G$ has no separating 4-cycle and  5-cycle.
\end{lemma}
\begin{proof}
	Let $C$ be a separating 4-cycle or 5-cycle of $G$. Let $G_1={\rm ext}[C]$ and $G_2={\rm int}[C]$. We choose the cycle $C$ so that $G_2$ has minimum number of vertices.
	By the minimality of $G$,  $G_1-E(P)$ has an $L$-colouring $\phi$. Let $L_2$ be the restriction of $L$ to $G_2$, except that
	vertices on $C$ are pre-coloured by $\phi$.

	As no vertex $v$ of $G_2$ has $|L_2(v)| =2$, $G_2$ has no bad edge, and no bad $4$-cycle. If $G_2$ has no bad vertex, then  $(G_2, C, L_2)$ is a valid target, and
	by the minimality of $G$, $G_2 - E(C)$ has an $L_2$-colouring $\psi$. Note that the cycle $C$ is properly coloured by $\phi$.
	Therefore the   union of $\phi$ and $\psi$ is an $L$-colouring of $G-E(P)$,  a contradiction.
	
	Assume $u$ is a bad vertex of $G_2$. Then $u$ has two neighbors in $C$ (if $u$ has three neighbors in $C$, then $G$ would contain a triangle).  By the minimality of $G_2$, $u$ is the only vertex of ${\rm int}(C)$. Hence  $d_G(u) =2 < |L(u)|$, contrary to Lemma \ref{lemmaldv}.
\end{proof}

 \begin{lemma}
  	 	\label{lemmasep6}
  	 	If $C$ is a separating 6-cycle of $G$,  then ${\rm int}(C)=\{z\} $ and $z\in X_{G,L}$.
  	 \end{lemma}
  	 \begin{proof}
  	 	Assume $C=x_1x_2x_3x_4x_5x_6$ is a separating cycle of $G$ for which $|{\rm int} (C)| \ge 2$ and subject to this condition, ${\rm int}(C)$ is minimal.
  	 	
  	 	By the minimality of $G$, there is an   $L$-colouring $\phi$ of ${\rm ext}[C]-E(P)$.
  	 	
  	 	If there exist a vertex $w \in {\rm int}(C)$ adjacent to at least two vertices in $V(C)$, then since $G$ has no separating $4$-cycle or $5$-cycle, and $|{\rm int}(C)| \ge 2$, we conclude that $w$ is adjacent to two vertices of $C$ that are of distance $2$ on $C$, say $w$ is adjacent to $x_1$ and $x_3$. For $C_1=x_1x_2x_3w$ and $C_2=x_3x_4x_5x_6x_1w$, ${\rm int}(C_1) =\emptyset$ and ${\rm int}(C_2)$ contains a unique vertex $z$ (by the minimality of ${\rm int}(C)$).   By Lemma \ref{lemmaldv}, $d_G(w) \ge |L(w)| \ge 3$. As   $w$ is adjacent to at most two vertices of $C$, we conclude that  $zw$ is an edge of $G$ and $d_G(w)=|L(w)|=3$.  By Lemma \ref{lemmaldv},  we have $d_G(z)=|L(z)|=3$. This contradicts the assumption that $X_{G,L}$ is an independent set.

  	 	Assume  no  vertex of  ${\rm int}(C)$ is adjacent to two vertices in $V(C)$.

  	 	Let $G'= {\rm int}[C]- \{x_6\}$ and $P'=x_1x_2x_3x_4x_5$,
  	 	and let $L'$ be the restriction of $L$ to $G'$, except that vertices of $P'$ are pre-coloured and
  	 	for any neighbour $u$ of $x_6$, $L'(u)=L(u) -\{\phi(x_6)\}$.
  	 	
  	 	As any vertex  $w \in {\rm int}(C)$ is adjacent to at most one vertex in $V(C)$, we know that $(G', P', L')$ has no bad vertex and no bad edge and no bad $4$-cycle.  Hence  $(G', P', L')$ is  a valid target, and  $G'-E(P')$ has an $L'$-colouring $\psi$. The union of $\phi$ and $\psi$ form an $L$-colouring of $G-E(P)$.  Hence ${\rm int}(C)$ contains a unique vertex $z$.  As $ 3 \le |L(z)| \le 4$ and  $z$ has at most three neighbours on $C$ and by Lemma \ref{lemmaldv},  $|L(z)|=3$ and  $z \in X_{G,L}$.
  	 \end{proof}

For $i=1,2,3,4$, an {\em  $i$-chord}   of $G$ is a path $W$ of length $i$ (i.e., has $i$ edges) such that the two end vertices  are two non-consecutive boundary vertices and the other vertices of  $W$ are interior vertices of $G$. A $1$-chord of $G$ is also called a chord of $G$.
An $i$-chord $W$ with end vertices $u$ and $v$ is called an $(a,b)$-$i$-chord
(respectively, an $(a, b^+)$-$i$-chord) if $|L(u)|=a$ and $|L(v)|=b$ (respectilvey, if $|L(u)|=a$ and $|L(v)| \ge b$. 

	Assume $W$ is an $i$-chord of $G$ with end vertices $u$ and $v$.   The two components of $W$ are induced subgraphs $G_{W,1}, G_{W,2}$  of $G$ such that $V(G_{W,1})\cap V(G_{W,2})= V(W)$   and $V(G_{W,1}) \cup V(G_{W,2}) = V(G)$. For convenience, we shall always assume that
	
\begin{itemize}
	\item[(1)]  $P \cap G_{W,1}$ is a path on the boundary of $G_{W,1}$.
	\item[(2)] Subject to (1),  $|V(G_{W,1}) \cap P|$ is maximum.
\end{itemize}
	We name the vertices so that   $u^-, v^+$  are vertices in  $G_{W,2}$.

	 As  $P \cap G_{W,1}$ is a path on the boundary of $G_{W,1}$,  $(G_{W,1}, P \cap G_{W,1}, L)$ is a valid target.
	 By the minimality of $G$, there is an   $L$-colouring $\phi$ of $G_{W,1}- E(P)$.

\begin{definition}
	\label{def-PW}
	Assume $W$ is an $i$-chord of $G$ with end vertices $u$ and $v$. Let $P_W$ be the subpath of $B(G_2)$ which consists of  $(W \cup P) \cap B(G_2)$, and moreover,  if $|L(u^-)|=2$ (respectively,  $|L(v^+)|=2$), then  $u^- \in P_W$ (respectively,   $v^+ \in P_W$).
	Let $G'_{W,1}$ be the subgraph of $G$ induced by $(V(G_{W,1}) \cup P_W)$. We say $W$ is {\em feasible} if $G'_{W,1} \ne G$. 
		If $W$ is a  feasible  and $\phi$ is an   $L$-colouring of $G'_{W,1}-E(P)$, then we denote by $L_{\phi}$ the restriction of $L$ to $G_{W,2}$, except that for $x \in P_W$, $L'(x) = \{\phi(x)\}$, i.e., vertices of $P_W$ are pre-coloured.  
\end{definition}

Note that if
	$W$ is a feasible $i$-chord and $\phi$ is an   $L$-colouring of $G'_{W,1}-E(P)$, then  $(G_{W,2}, P_W, L_{\phi})$ is not a valid target, for otherwise $G_{W,2}-E(P_W)$ has an $L_{\phi}$-colouring $\psi$, and the union of $\phi$ and $\psi$ is an $L$-colouring of $G-E(P)$.

{For convenience, when $W$ is a feasible $i$-chord, we may write $(G_{W,2}, P_W, L)$ for $(G_{W,2}, P_W, L_{\phi})$, where $\phi$ is not explicitly specified and vertices of $P_W$ are pre-coloured.}

\begin{lemma}
   \label{lemmapw5}
Assume $W$ is a feasible $i$-chord   of $G$, then $|P_W| \ge 5$.
\end{lemma}
\begin{proof}
	Assume to the contrary that there is a  feasible $i$-chord $W$
	with $P_W = p'_1p'_2 \ldots p'_t$ for some  $t \le 4$. We
	choose such an $i$-chord $W$ so that $G_{W,2}$ has minimum number of vertices.
	Let $\phi$ be an $L$-colouring  of $G'_{W,1}-E(P)$.  Then $(G_{W,2}, P_W, L_{\phi})$ is a target.

	  As $|L_{\phi}(x)| \ge 2$ implies that $L_{\phi}(x) = L(x)$, we conclude that   $(G_{W,2}, P_W, L_{\phi})$ has no bad $4$-cycle and no bad edge. By definition of $P_W$, there is no vertex $u$ adjacent to an end vertex of $P_W$ and with $|L(u)|=2$, i.e.,  $(G_{W,2}, P_W, L_{\phi})$ has no bad vertex, a contradiction.
\end{proof}

  	\begin{lemma}
     \label{lemmanoichord}
 There exists no $i$-chord as described below:
  \begin{itemize}
  	\item  a $(1,1)$-$1$-chord,  a $(2, 1^+)$-$1$-chord, a $(2,2^+)$-$2$-chord, a  $(2,2)$-$3$-chord,  {a $(3,3)$-1-chord.}
  	 \item  a  $(1,1^+)$-$1$-chord $W$ with $|P \cap B(G_{W,2})| \le 2$, a $(1,1^+)$-$2$-chord with   $|P \cap B(G_{W,2})| = 1$,   a $(1,2)$-$2$-chord $W$ with $|P \cap B(G_{W,2})| \le  2$. 
  \end{itemize}
  	\end{lemma}	
  	\begin{proof}
  		  If $W$ is an $i$-chord listed above,   then it is straightforward to verify   that $W$ is feasible and $|P_W| \le 4$, contrary to Lemma \ref{lemmapw5}. 
  	\end{proof}

  	\begin{lemma}
  		\label{lemmabadv}
  		Assume $W$ is a feasible $i$-chord for some $i=1,2,3,4$, $\phi$ is an $L$-colouring of $G_{W,1}$. If $|P_W|=5$, then $(G_{W,2}, P_W, L_{\phi})$  has a bad vertex $x$ with $|L(x)|=3$, and $|N_G(x) \cap P_W|\ge 2$. Moreover, if $|P_W \cap P|=3$, then   $y$ has exactly one neighbour in $P_W \cap P$ and one neighbour in $P_W-P$.	
  	\end{lemma}
  	\begin{proof}
  		Let $\phi$ be an $L$-colouring of $G'_{W,1}-E(P)$. It follows from the definition that for any vertex $x$ of $G_{W,2}$, if $|L_{\phi}(x)| \ge 2$, then
  		$L_{\phi}(x)=L(x)$. Hence $(G_{W,2}, P_W, L_{\phi})$ has no bad 4-cycle and no bad edge. Assume $P_W=p'_1p'_2\ldots p'_5$ and  $x$ is a bad vertex, then we may assume that $p'_jx$ is an edge where $j \le 3$. If $|L(x)|=2$, then $x$ is a boundary vertex of $G$ and it is easy to verify that $p'_1 \dots p'_jx$ contains a  feasible $i$-chord $W'$ for some $i \le j$ and with $|P_{W'}| \le 4$, contrary to Lemma \ref{lemmapw5}.  If $|L(x)|=4$, then $x$ is adjacent to $p'_1,p'_3, p'_5$, and either $x$ is a boundary vertex and $xp'_1$ or $xp'_5$ is a chord, or $x$ is an interior vertex and $W'=p'_1xp'_5$ is a 2-chord with $|P_{W'}|=3$, contrary to Lemma \ref{lemmapw5}.      Thus $|L(x)|=3$ and  $|N_G(x) \cap P_W|\ge 2$.   For the moreover part, assume $P_W   =  p_1   p_2  p_3  p'_4  p'_5 $. If $y$    has two neighbours in $P_W \cap P$, then $y$ is adjacent to $p_1,p_3$ and   $|L(y) - (L(p_1) \cup L(p_3))|=1$. Thus $y$ is a bad vertex in $(G,P,L)$, a contradiction.   Hence $y$ has exactly one neighbour in $P_W \cap P$ and one neighbour in $P_W-P$.
  	\end{proof}

  	 \begin{lemma}
  	 	\label{lemmano332chord}
  	{	There is no feasible $(3,3)$-2-chord $uvw$ such that $|L(v)|=3$.}
  	 \end{lemma}
  	 \begin{proof}
 {	 
  	Assume $W=uvw$ is a feasible $(3,3)$-2-chord with $|L(v)|=3$. By Lemma \ref{lemmapw5}, $|P_W|=5$ and hence
  	 	 $P_W=u^+uvww^-$ and $|L(w^-)|=|L(v')|=2$. By Lemma \ref{lemmabadv},   $(G_{W,2}, P_W, L)$ has a bad vertex $x$ with $|L(x)|=3$ and $|N_G(x) \cap P_W|\ge 2$. If $x$ is   adjacent to $v$, then $x$ is a boundary vertex, and is adjacent to one of $u^+,w^-$ and hence $G$ has a bad 4-cycle, a contradiction. If $x$ is adjacent to $u^+$ and $w^-$, then either $x$ is a boundary vertex and $x$ is contained in a separating $6$-cycle with a unique vertex $y$ adjacent to $x,w,u$, and hence $(G,P,L)$ has a bad 4-cycle, or $x$ is an interior vertex and $u^+xw^-$ is a $(2,2)$-chord, contrary to Lemma \ref{lemmanoichord}. If $x$ is adjacent to $u^+$ and $w$, then 
      $u^+xw$  is a $(2,3)$-2-chord or a $(2^+,2)$-chord, contrary to Lemma \ref{lemmanoichord}. Similarly, $x$ is not adjacent to $w^-$ and $u$. Hence $x$ is adjacent to $u$ and $w$. Then $W'=uxw$ is a feasible $(3,3)$-2-chord, and $|L(x)|=3$. Now $(G_{W',2},P_{W'},L)$ has a bad vertex $x'$. If $x'$ is not adjacent to $x$, then $x'$ would be contained in a separating cycle $C$, with $x$ be the unique interior vertex in ${\rm int}(C)$. But then $d_G(x)=2$, a contradiction. If $x'$ is adjacent to $x$, then $x'$ is a boundary vertex adjacent to one of $u^+$ and $w^-$, and hence $G$ has a bad 4-cycle in $(G,P,L)$,  a contradiction. }
  	\end{proof}
  	
  	\begin{lemma}
  		\label{lemma-34chord}
  		{	There is no feasible $(3,4)$-2-chord $uvw$ such that $|L(v)|=3$.}
  	\end{lemma}
  	\begin{proof}
  		{	 
  	 Assume that $W=uvw$ is a feasible $(3,4)$-2-chord with $|L(v)|=3$. We choose $W$ so that $G_{W,2}$ is minimum.
  	 Then $P_W=u^+uvww^-$ where $|L(w^-)|=|L(u^+)|=2$ and  $(G_{W,2}, P_W, L)$ has a bad vertex $x$. If $x$ is adjacent to $u$ and $w$, then $W'=uxw$ is a feasible $(3,4)$-2-chord,  $|L(x)|=3$ and $G_{W',2}$ is smaller than $G_{W,2}$, contrary to the choice of $W$. }
  	 
  {	 If $x$ is adjacent to $v$, then $x$ is a boundary vertex and $W'=wvx$ is a $(3,3)$-2-chord with $|L(v)|=3$. By Lemma \ref{lemmano332chord}, $W'$ is not feasible,  hence $x$ is adjacent to $w^-$, but then $xvww^-$ is a bad 4-cycle, a contradiction.  } 
  	 
 {	   If $x$ is adjacent to both $u^+$ and $w^-$, then since $G$ has no $(2,2)$-2-chord, we conclude that $x$ is a boundary vertex, but then since $d_G(x) \ge 3$, the 6-cycle $u^+uvww^-x$ contains a unique interior vertex $z$ adjacent to $x, u$ and $w$. Then $xzww^-$ is a bad 4-cycle, a contradiction. }
  	 
  	{  Thus $x$ is adjacent to exactly one of $u^+,w^-$ and one of $w,u$. But then $wxu^+$ or $uxw^-$ is a 2-chord that contradicts Lemma \ref{lemmanoichord} or the choice of $W$. }
  	 \end{proof}

  	 	 \begin{lemma}
  	 	 	\label{lemma-13chord}
  	 	 	{There is no feasible   $(1,3)$-2-chord $uvw$ with $|P \cap B(G_{W,2})| \le 2$ and $|L(v)|=3$.}
  	 	 \end{lemma}
  	 	 \begin{proof}
  	 	 	{	 	Assume $W=uvp_2$ is a feasible $(1,3)$-2-chord with $|L(v)|=3$. By Lemma \ref{lemmanoichord},  $|P \cap B(G_{W,2})| =2$.  Assume $p_1,p_2 \in G_{W,2}$. We choose such a 2-chord so that $G_{W,2}$ is minimum. 
  	 	 		As $W$ is feasible, $u$ is not adjacent to $p_1$.   By Lemma \ref{lemmapw5} and Lemma \ref{lemmabadv}, $P_W=p_1p_2vuu^+$ where $|L(u^+)|=2$ and  $(G_{W,2}, P_W, L)$ has a bad vertex $x$. By the minimality of $G_{W,2}$, $x$ is not adjacent to $p_2$ and $u$. Also $x$ is not adjacent to $p_1$ and $u$, for otherwise either $x$ is an interior vertex and $W'=p_1xu$ is a $(1,3)$-2-chord with $|P \cap B(G_{W',2})|=1$, or $x$ is a boundary vertex and $xu$ is a $(3,3)$-chord, both in contrary to Lemma \ref{lemmanoichord}. 
  	 	 		Thus $x$ is   adjacent to $v$, but then $x$ is a boundary vertex and $xvu$ is a $(3,3)$-2-chord with $|L(v)|=3$, contrary to Lemma \ref{lemmano332chord}.    }
  	 	 \end{proof}
  	 
  \begin{lemma}
     \label{lemma-no(1,2)-2chord}
   There is no $(1,2)$-$2$-chord. 
  	\end{lemma}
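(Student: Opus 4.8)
The plan is a proof by contradiction; among all $(1^{+},2)$-$2$-chords of $G$ choose $W=uwv$ with $|V(G_{W,2})|$ minimum, where $w$ is the interior vertex, $u,v$ the endpoints on $B(G)$, $|L(v)|=2$, and the naming is the standing one ($u^{-},v^{+}\in G_{W,2}$). First I would record the easy consequences of $|L(v)|=2$: $v\notin P$, and since $(G,P,L)$ has no bad edge the edge $vv^{+}$ gives $|L(v^{+})|\neq 2$, so $v^{+}\in P_{W}$ is possible only if $v^{+}\in P$, and then ($v^{+}$ being an end of $P$) the no--bad--edge condition forces $|P|\le 2$. Running through the three possibilities of Observation \ref{ob-feasible}, each one either puts $v$ into $P$ or makes $vv^{+}$ (equivalently $u^{-}v$, since there $v^{+}=u^{-}$ has list size $2$) a bad edge; hence $W$ is feasible and Lemma \ref{lemma4} yields $|P_{W}|\ge 5$.

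Next I would identify $P_{W}$. Let $A$ be the boundary arc from $v$ to $u$ through $v^{+}$ and $u^{-}$. If $P$ avoids $A$, then $P_{W}\subseteq\{u,w,v,u^{-}\}$ has at most $4$ vertices, contradicting $|P_{W}|\ge 5$. So $P$ meets $A$; and $P\subseteq A$ is impossible, since that would violate the maximality of $|P\cap G_{W,1}|$ unless both endpoints of $W$ have list size $2$, and a $(2,2)$-$2$-chord is excluded by Lemma \ref{lemma1}. Hence $P$ straddles $u$, so $u\in P$, $|L(u)|=1$, and $W$ is a $(1,2)$-$2$-chord; then Lemma \ref{lemma1} gives $|P\cap B(G_{W,2})|\ge 3$, and comparing with the maximality of $|P\cap G_{W,1}|$ forces $|P|=5$ and $P_{W}=u^{--}u^{-}uwv$ with $|P_{W}\cap P|=3$.

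Now the core argument. Fix an $L$-colouring $\phi$ of $G'_{W,1}-E(P)$ (which exists by feasibility and minimality of $G$), form $L_{\phi}$ as in Definition \ref{def-PW}, and apply Lemma \ref{lemma-nobadcycle}: $(G_{W,2},P_{W},L_{\phi})$ has a bad vertex $y$, and since $|P_{W}\cap P|=3$ the ``moreover'' clause gives $|L(y)|=3$ and shows that $y$ has exactly one neighbour $a\in\{u^{--},u^{-},u\}$ and exactly one neighbour $b\in\{w,v\}$; as $a,b$ are pre-coloured this in turn forces $\phi(a)\neq\phi(b)$ and $\{\phi(a),\phi(b)\}\subseteq L(y)$. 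Triangle-freeness excludes $(a,b)=(u,w)$. If $(a,b)\neq(u^{--},v)$, then $y$ together with the sub-path of $P_{W}$ from $a$ to $b$ spans a $4$- or $5$-cycle $C$ of $G$; by Lemma \ref{lemma5} $C$ is non-separating, and as $G_{W,1}$ lies outside it $C$ has empty interior, so $d_{G}(y)=2<3=|L(y)|$, contradicting Lemma \ref{lemma-degree}. In the remaining case $a=u^{--}$, $b=v$: then (since $|A|\ge 3$, else there is a triangle through $y$) $u^{--}v\notin E(G)$ and $u^{--},v$ are non-consecutive on $B(G)$, so $u^{--}yv$ is a $2$-chord; if $y$ is an interior vertex it is a $(1^{+},2)$-$2$-chord whose $G_{\cdot,2}$-side lies strictly inside $G_{W,2}$, contradicting the minimal choice of $W$, and if $y\in A$ then its neighbourhood is small enough to produce a $4$- or $5$-cycle through $y$ and Lemma \ref{lemma5} finishes as before.

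The main obstacle is that last case, the bad vertex ``far'' from $v$: there the natural cycle through $y$ along $P_{W}$ is a hexagon, so Lemma \ref{lemma5} does not apply directly, and one must route around $A$ or feed the chord $u^{--}yv$ back into the induction --- which is exactly why $W$ is chosen with $|V(G_{W,2})|$ minimum. Ruling out the boundary sub-case $y\in A$ (and, more generally, every candidate bad vertex) will need the most care, perhaps with a re-colouring of $G'_{W,1}$ that keeps $\phi(v)$ free using $|L(v)|=2$. By comparison, the feasibility of $W$ and the identification of $P_{W}$ are routine once Observation \ref{ob-feasible}, Lemma \ref{lemma1}, Lemma \ref{lemma4} and the maximality convention on $G_{W,1}$ are in hand.
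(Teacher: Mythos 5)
Your setup (feasibility of $W$ via Observation \ref{ob-feasible}, then $|P_W|\ge 5$ from Lemma \ref{lemma4}, then using Lemma \ref{lemma1} and the maximality convention to pin down $P_W=u^{--}u^-uwv$ with $|P_W\cap P|=3$, and invoking the ``moreover'' clause of Lemma \ref{lemma-nobadcycle} to get a bad vertex $y$ with one neighbour $a$ among the three $P$-vertices and one neighbour $b\in\{w,v\}$) matches the paper. The gap is in your core case analysis. The step ``$C$ is a $4$- or $5$-cycle through $y$, it is non-separating by Lemma \ref{lemma5}, $G_{W,1}$ lies outside it, hence its interior is empty, hence $d_G(y)=2$'' is a non sequitur: $y$ lies \emph{on} $C$, not inside it, so emptiness of the interior of $C$ only excludes neighbours of $y$ strictly inside $C$. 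Since $|L(y)|=3$ forces $d_G(y)\ge 3$, the third neighbour of $y$ simply lies in the exterior of $C$, in the remainder of $G_{W,2}$ (between the path $a\,y\,b$ and the boundary arc), and no contradiction with Lemma \ref{lemma-degree} arises. This one line is what you use to dispose of the cases $(a,b)=(u^{--},w)$, $(u^-,w)$, $(u^-,v)$ and $(u,v)$, so the heart of the proof is missing.

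That these cases cannot be dismissed so cheaply is visible in the paper's own argument. The cases where $y$ is adjacent to the list-$2$ endpoint $v$ (your $(u,v)$, $(u^-,v)$, $(u^{--},v)$) are handled there exactly as you handle only $(u^{--},v)$: the $2$-chord $p_iyv$ contradicts the minimal choice of $W$ (so your minimality argument should simply be applied to all three, and it repairs those cases, modulo the boundary sub-case you leave vague). But the two cases where $y$ is adjacent to the middle vertex $w$ need genuinely different arguments: for $N_G(y)\cap P_W=\{u^{--},w\}$ the paper forms the feasible $3$-chord $u^{--}ywv$ and contradicts Lemma \ref{lemma4} because $|P_{W'}|=4$; for $N_G(y)\cap P_W=\{u^-,w\}$ it applies Lemma \ref{lemma-nobadcycle} a second time to the $3$-chord $u^-ywv$, obtains a second bad vertex $z$, rules out $z$ adjacent to $y$ or to $v$ (independence of $S_{G,L}$ and chord minimality), and then exhibits a separating $4$- or $5$-cycle contradicting Lemma \ref{lemma5}. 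Neither of these is recoverable from your empty-interior device, so as written the proposal does not prove the lemma.
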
	
  	\begin{proof}
  			Assume $W=vxu$ is a  $(1,2)$-$2$-chord, with $v$ be a pre-coloured vertex.  By Lemma \ref{lemmanoichord}, $|P \cap B(G_{W,2})| = |P \cap B(G_{W,1})| =3$. Thus $P=p_1p_2p_3p_4p_5$ and   $W=p_3xu$. We may assume that 
  	$p_1,p_2,p_3$ is contained in $G_{W,2}$ and $p_3,p_4,p_5$ is contained in $G_{W,1}$.

  { Observe that either $G_{W,1}$ contains no vertex $y$ with $\{u, p_i\} \subseteq N_G(y)$ for some $i \in \{3,4\}$ or  $G_{W,2}$ contains no vertex $y$ with $\{u, p_i\} \subseteq N_G(y)  $ for some $i \in \{3,2\}$, for otherwise we would have $d_G(x)=2$, a contradiction. By symmetry, we may assume that  $G_{W,2}$ contains no vertex $y$ with $\{u, p_i\} \subseteq N_G(y)  $ for some $i \in \{3,2\}$.   }

 {It is easy to verify that $W$ is feasible.    By Lemma \ref{lemmabadv},  
  	$(G_{W,2}, P_W, L  )$ has a bad vertex  $y$. If $y$ is adjacent to $p_1$ and $u$, then either $y$ is a boundary vertex and $p_1y$ is a $(1,2^+)$-1-chord, or $uy$ is a $(2, 1^+)$-1-chord, or $y$ is an interior vertex and $p_1yu$ is a $(1,2)$-2-chord, contrary to Lemma \ref{lemmanoichord}.  	If $y$ is adjacent to $p_1$ and $x$, then let $W'=p_1yxv$, we can check that $W'$ is feasible and $|P_{W'}|=4$,  contrary to Lemma \ref{lemmapw5}. }
 
 { 	Thus $N_G(y) \cap P_W = \{x, p_2\}$. Let $W'=p_2yxu$.  We can check that $W'$ is a feasible $3$-chord, and  $P_{W'} = p_1p_2yxu$. 
  	By Lemma \ref{lemmabadv},  $(G_{W',2}, P_{W'}, L )$ has a bad vertex $z$.  If $z$ is not adjacent to $y$, then $z$ is contained in a separating cycle $C$ and $y$ is the unique vertex in ${\rm int}(C)$, and hence $d_G(y)=2$, a contradiction. Thus $z$ is adjacent to $y$, and hence $z$ is a boundary vertex, and $z$ is adjacent to at least one of $u$ and $p_1$.   }

 {If $z$ is adjacent to $u$ but not to $p_1$, then $p_2yz$ is a feasible $(1,3)$-2-chord with $|L(y)|=3$, contrary to Lemma \ref{lemma-13chord}. If $z$ is adjacent to $p_1$ but not $u$, then $W'=uxyz$ is a feasible 3-chord and $(G_{W',2},P_{W'},L)$ has a bad vertex $v$, where $P_{W''}=uxyzz'$. If $v$ is adjacent to $y$, then $v$ is a boundary vertex and $p_2yv$ is a $(1,3)$-2-chord with $|P \cap B(G_{W,2})|=2$ and $|L(y)|=3$, contrary to Lemma \ref{lemma-13chord}.   Hence $v$ is not adjacent to $y$. If $v$ is adjacent to $u$ and $z$, or $v$ is adjacent to $x$ and $z'$, then there  is a $(3,2)$-2-chord or  a $(2,2)$-3-chord, a contradiction to Lemma \ref{lemmanoichord}.  If $v$ is adjacent to $u$ and $z'$, then since $G$ has no $(2,2)$-2-chord, $v$ is a boundary vertex. Hence the interior of the cycle $uxyzz'v$ contains a unique vertex $v'$ adjacent to $v,x,z$. But then $z'zv'v$ is a bad 4-cycle. 
 	Thus $v$ is adjacent to $x$ and $z$. Then $W'''=uxvz$ is a feasible 3-chord, and $(G_{W''',2},P_{W'''},L)$ has a bad vertex $v'$. As $d_G(v) \ge 3$, $v'$ is adjacent to $v$ and is a boundary vertex. Then $v'vz$ is a feasible $(3,3)$-2-chord with $|L(v)|=3$, contrary to Lemma \ref{lemmano332chord}.   }
 	
 Thus $z$ is adjacent to both $u$ and $p_1$.

{Since $x, y$ are both interior vertices and adjacent and $|L(y)|=3$, we know that $|L(x)|=4$. Hence $d_G(x) \ge 4$. This implies that $G_{W,1}$ contains no vertex $y$ with $\{u, p_i\} \subseteq N_G(y)  $ for some $i \in \{3,4\}$.  So the argument above applies to $G_{W,1}$ as well. Therefore 
  $B(G_{W,1})=p_5p_4p_3xuz'$ is a 6-cycle, whose interior contains a unique vertex $y'$ which is adjacent to $p_4,x,z'$. 
}

 {
The structure of $G$   and the lists are completely determined. It is easy to find an orientation of $G-E(P)$ so that each vertex $v$ has out-degree   $|L(v)|-1$. As $G$ is bipartite, this implies that $G$ is $L$-colourable.
 }
  	\end{proof}
 Combined with Lemma \ref{lemmanoichord}, we have the following corollary.
 
\begin{corollary}
\label{cori22chord}
$G$ has no $(1^+,2)$-$2$-chord.
\end{corollary}

  \begin{lemma}
     \label{lemma112chord}
   There is no $(1,1)$-$2$-chord.
  	\end{lemma}	
  	\begin{proof}
  	{	Assume to the contrary that $W=p_ixp_j$ is a $2$-chord. Let $G_{W,2}=G-\{p_{i+1}, \ldots, p_{j-1}\}$ and $P_W=p_1\ldots p_ixp_j \ldots p_5$. We choose such a $2$-chord so that $G_{W,2}$ has minimum number of vertices.  Note that $|L(x)|=4$, for otherwise $x$ is a bad vertex.}

  If $|V(P)|=k  \leq 4$ or $j-i\geq 3$, then $W$ is a feasible chord with $|P_W| \le 4$, contraty to Lemma \ref{lemmapw5}.  Thus we assume that $P=p_1p_2p_3p_4p_5$ and $j-i =2$. By symmetry, we may assume that $i \le 2$, i.e., $W=p_1xp_3$ or $W=p_2xp_4$.

  If $W=p_1xp_3$,  we pre-colour  $x$ and let $G'= G-\{p_2\}$ and $P'=p_1xp_3p_4p_5$, and let $L'$ be the restriction of $L$ to $G'$, except that $x$ is pre-coloured.  As $|L'(v)| \ge 2$ implies that $|L(v)| = |L'(v)|$,  $(G', P', L')$  has no bad $4$-cycle and no bad edge.   Hence $(G',P',L')$   has a bad vertex $y$.
As $G$ has no separating $4$-cycle, we conclude that   $y$ has exactly two neighbours in $P'$. As $y$ is not a bad vertex in $(G,P,L)$, we know that  $N_G(y) \cap P' =\{x, p_4\}$ or $\{x, p_5\}$. 

If $y$ is a boundary vertex, then $p_1xy$ is a $(1,3)$-2-chord that contradicts Lemma \ref{lemmanoichord}. So $y$ is an interior vertex. 
 If $y$ is adjacent to $x$ and $p_5$, then  $W'=p_1xyp_5$  is a feasible chord with $|P_{W'}| =4$, contrary to Lemma \ref{lemmapw5}. If $y$ is adjacent to $x$ and $p_4$, then  $W''= p_1xyp_4$ is a feasible 3-chord with $P_{W''}=p_1xyp_4p_5$, and $(G_{W,2}, P_{W''}, L)$ has a bad vertex $z$. 
	As $d_G(y) \ge 3$, we know that $z$ is a boundary vertex adjacent to $y$. Then $W''' = zyp_4$ is a $(1,3)$-2-chord with $|P \cap B(G_{W''',2})| =2$ and $|L(y)|=3$, contrary to Lemma \ref{lemma-13chord}.

{ Therefore  $W=p_2xp_4$.  Then   $(G_{W,2}, P_W, L)$ has a bad  vertex $y$.
  If $y$ has two neighbours, say $p_i, p_j$,  in $P'-\{x\}$, then   $W'=p_iyp_j$ is a $(1,1)$-$2$-chord for which $G_{W',2}$ has fewer vertices than $G_{W,2}$, contrary to the choice of $W$. So  $y$ is adjacent to $x$  and
   has exactly one neighbour  in $P'-\{x\}$.  By symmetry, we may assume that $y$ is adjacent to $x$ and $p_1$. }
   
 {  If  $y$ is an interior vertex, then $W'=p_1yxp_4$ is a feasible 3-chord, and $(G_{W',2},P_{W'},L)$ has a bad vertex $z$. As $d_G(y) \ge 3$, $z$ is a boundary vertex adjacent to $y$.
   Then $W'''=p_1yz$ is a $(1,3)$-2-chord with $|P \cap B(G_{W''', 2})| =1$, that contradicts Lemma \ref{lemmanoichord}.
   Therefore  $y$ is a boundary vertex.}
   
  {Let $k \ge 0$ be the maximum integer such that $p_1y_0y_1\ldots y_{2k}$ is a subpath of $B(G)$, $x$ is adjacent to $y_0,y_2,\ldots, y_{2k}$, $|L(y_{2i})|=3$ and $|L(y_{2i-1})|=2$ for $i=1,2,\ldots, k$. Such a path exists, as $p_1y_0=p_1y$ is such a path. If $y_{2k}$ is adjacent to $p_5$, then $B(G)=y_0\ldots y_{2k}p_1\ldots p_5$ with a unique interior vertex $x$ adjacent to every other vertex of the boundary cycle. Orient the edges of $G-E(P)$ so that $p_i$ are sinks, and $y_0\ldots y_{2k}$ is a directed path, $x$ has out-neighbour $y_0$ and in-neighbours  $y_{2i}$ for $i=1,2,\ldots, k$. Then each vertex $v$ has out-degree $|L(v)|-1$. Hence $G$ is $L$-colourable. }
   
{Thus $y_{2k}$ is not adjacent to $p_5$. Then $W'=p_4xy_{2k}$ is a feasible $(1,3)$-2-chord.   Then $P_{W'}=p_5p_4xy_{2k}y_{2k}^+$ and 
   	$(G_{W',2}, P_{W'}, L)$ has a bad vertex $z$. } 

\medskip
{\bf Case 1}   $z$ is an interior vertex. 

     If $z$ is adjacent to $y_{2k}^+$ and $p_4$ or $p_5$, then $p_4zy_{2k}^+$ or $p_5zy_{2k}^+$ is a 2-chord that contradicts Corollary \ref{cori22chord}.  If $z$ is adjacent to $y_{2k}$ and  $p_5$, then $W' = p_5zy_{2k}$ is a 2-chord with $|P_{W'}| \le 4$,  a contradiction. If $z$ is adjacent to $p_4$ and $y_{2k}$, then $W'=p_4zy_{2k}$ is a feasible $(1,3)$-2-chord with $|L(z)|=3$, contrary to Lemma \ref{lemma-13chord}. 
     
     Thus $z$ is adjacent to $x$ and exactly one of $p_5$ and $y_{2k}^+$. 
 If $z$ is  
     adjacent to $x$ and   $p_5$, then $w''=p_5zxy_{2k}$ is a feasible 3-chord, with $P_{W''}= p_5zxy_{2k}y_{2k}^+$, and $(G_{W'',2}, P_{W''}, L)$ has a bad vertex $z'$. If $z'$ adjacent to $x$ and $y_{2k}^+$. Then $W'''=p_5zxz'y_{2k}^+$ is a feasible 4-chord, with $P_{W'''}=W'''$. Let $z''$ be a bad vertex of $(G_{W''',2},P_{W'''},L)$. As $d_G(z), d_G(z') \ge 3$, $z''$ is adjacent to at least one of $z,z'$. Thus $z''$ is a boundary vertex, and $p_5zz''$ or $y_{2k}^+z'z''$ is a 2-chord that contradicts Lemma \ref{lemmanoichord}.   Therefore $z'$ is adjacent to $z$, and hence $z'$ is a boundary vertex and $W'''=p_5zz'$ is a 2-chord that contradicts Lemma \ref{lemmanoichord}.
     
     If $z$ is adjacent to $x$ and $y_{2k}^+$, then  
     $w''=p_4xzy_{2k}$ is a feasible 3-chord, with $P_{W''}= p_5p_4xzy_{2k}^+$, and $(G_{W'',2}, P_{W''}, L)$ has a bad vertex $z'$. Similarly,  $z'$ is a boundary vertex adjacent to $z$ and $W'''=y_{2k}^+zz'$ is a 2-chord that contradicts Lemma \ref{lemmanoichord}.

\medskip
{\bf Case 2}   $z$ is a boundary vertex. 
 
     If $z$ is adjacent to $y_{2k}^+$ and $p_5$, then $p_5zy_{2k}^+y_{2k}xp_4$ is a separating 6-cycle, whose interior contains a unique vertex $z'$ which is adjacent to $z, p_4,y_{2k}$ (note that $d_G(z) \ge 3$).  But then $z'zy_{2k}^+y_{2k}$ is a bad 4-cycle, a contradiction. 
    If $z$ is adjacent to $p_4$ or $y_{2k}$, then $zp_4$ or $zy_{2k}$ is a chord that contradicts Lemma \ref{lemmanoichord}.  
     
     Therefore 
       $z$ is  adjacent to $x$ and one of $p_5$ and $y_{2k}^+$.   
By the maximality of $k$, $z$ is not adjacent to $y_{2k}^+$. Hence $z$ is adjacent to $p_5$. 
     
   	Let $k'$ be the largest integer such that $p_5z_0z_1\ldots z_{2k'}$ is a subpath of $B(G)$,    $x$ is adjacent to $z_0,z_2,\ldots, z_{2k'}$, $|L(z_{2i})|=3$ and $|L(z_{2i-1})|=2$ for $i=1,2,\ldots, k'$. Such a path exists, as $p_5z=p_5z_0$ is such a path. By the maximality of $k'$, $z_{2k'}$ is not adjacent to $y_{2k}^+$. So $W''=z_{2k'}xy_{2k}$ is a feasible 2-chord, and $(G_{W'',2},P_{W''}, L )$ has a bad vertex $v$, where $P_{W''}=y_{2k}^+y_{2k}xz_{2k'}z_{2k'}^-$. 
   
  {If $v$ is adjacent to   $y_{2k}^+$, then by using Corollary \ref{cori22chord}, we know that $v$ is not adjacent to $z_{2k'}$, and by using Lemma \ref{lemmasep6}, we can show that $v$ is not adjacent to $z_{2k'}^-$. Hence $v$ is adjacent to $x$.   Then $W'''=y_{2k}^+vxz_{2k'}$ is a feasible 3-chord, and $(G_{W''',2}, P_{W'''}, L)$ has a bad vertex $v'$, where $P_{W'''}= y_{2k}^+vxz_{2k'}w'$. Now $v'$ is adjacent to $v$ (for otherwise $d_G(v)=2$), and hence $v'$ is a boundary vertex. But then $v'vy_{2k}^+$ is a $(1^+,2)$-2-chord, in contrary to Corollary \ref{cori22chord}. So $v$ is not adjacent to $y_{2k}^+$. By the same argument, $v$ is not adjacent to $z_{2k'}^-$. Therefore $v$ is adjacent to $y_{2k}$ and $z_{2k'}$. Then $W''''=y_{2k}vz_{2k'}$ is a feasible $(3,3)$-2-chord with $|L(v)|=3$, contrary to Lemma \ref{lemmano332chord}.   }
 \end{proof}

\begin{lemma}
	\label{lemmanochord}
	$G$ has no chord.
\end{lemma}
\begin{proof}
	Assume $G$ has a chord.
	{	By  Lemma \ref{lemmanoichord} and Lemma \ref{lemmapw5}, we may assume that  $P=p_1p_2p_3p_4p_5$,  $W=p_3u$ and  $p_1,p_2,p_3 \in V(G_{W,2})$.  }
		
		  \begin{claim}
		  	\label{clm4}
		  {$|L(u)|=4$ and either $u$ is adjacent to $p_1$ or  $B(G_{W,2})$ is a 6-cycle $p_1p_2p_3uu^+x$ with a unique interior vertex $w$ adjacent to $u,p_2,x$.  }
		  \end{claim}
		  \begin{proof}
		{ 	  	Assume $u$ is not adjacent to $p_1$. Then $W$ is feasible,      $P_W=p_1p_2p_3uu^+$, and $(G_{W,2}, P_W, L)$ has a bad vertex $w$.}
	
	{    Then $|L(w)|=3$ and $w$ has one neighbour  in $\{p_1,p_2,p_3\}$ and one neighbour in $\{u, u^+\}$. If $w$ is adjacent to $u$ and $p_1$, then $W'=p_1wu$ is a feasible 2-chord with $|P_{W'}|=4$, contrary to Lemma \ref{lemmapw5}. If $w$ is adjacent to $u^+$ and $p_i$ for some $i \in \{1,2,3\}$, then $p_iwu^+$ is a $(1,2)$-2-chord, contrary to Lemma \ref{lemma-no(1,2)-2chord}. }

	 {    Thus $w$ is adjacent to $u$ and $p_2$.  Then $W'=p_2wu$ is a feasible 2-chord, with $P_{W'}=p_1p_2wuu^+$ and $(G_{W',2},P_{W'},L)$ has a bad vertex $x$. As $d_G(w) \ge 3$, $x$ is a boundary vertex adjacent to $w$ and at least one of $u^+$ and $p_1$. }

	    {  If $x$ is adjacent to $u^+$ but not to $p_1$, then $W''=p_2wx$ is a feasible $(1,3)$-3-chord with $|L(w)|=3$ and $| P \cap B(G_{W'',2})|=2$, contrary to Lemma \ref{lemma-13chord}. If $x$ is adjacent to $p_1$ but not to $u^+$, then $xwu$ is a $(3,4)$-2-chord with $|L(w)|=3$, contrary to Lemma \ref{lemma-34chord}. } 
	    	
	    	{Note that since $G$ has no bad vertex and no bad 4-cycle, $|L(u)|=4$.}
      This completes the proof of Claim \ref{clm4}.
	    \end{proof}

{	By Symmetry,  either $G_{W,1}$  consists of a 6-cycle $p_5p_4p_3uu^-x'$ with a unique interior vertex $w'$ adjacent to $p_4,u$ and $x'$, or $G_{W,1}$ is a 4-cycle $p_5p_4p_3u$. In any case, $G$ is a bipartite graph and it is easy to find an orientation $D$ of $G-E(P)$ with
	$d_D^+(v)=|L(v)|-1$ for each vertex $v$ of $G$. Hence $G$ is $L$-colourable, a contradiction.  }
\end{proof}

\begin{definition}
	\label{def-semifan}
	A {\em semi-fan} is a graph obtained from a path $v_1v_2\ldots v_{2q+1}$ (for some $q \ge 1$) by adding a vertex $u$ adjacent to $v_1, v_3, \ldots, v_{2q+1}$. The vertex $u$ is called the {\em center} of the semi-fan, and $v_1, v_{2q+1}$ are the {\em end vertices} of the semi-fan.
\end{definition}

\begin{lemma}
	\label{lemma-semifan}
	If $W=uvw$ is a $(3^+,3)$-$2$-chord of $G$, then $G_{W,2}$ is a semi-fan with center $v$.
\end{lemma}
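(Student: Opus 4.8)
The plan is to argue by contradiction using a carefully chosen minimal $2$-chord. Let $W=uvw$ be a $(3^+,3)$-$2$-chord, so $|L(u)|\ge 3$, $|L(v)|\ge 3$ (in fact the interior vertex $v$ has $|L(v)|\ge 3$ automatically) and $|L(w)|=3$; among all such $2$-chords I would choose one with $|V(G_{W,2})|$ minimum. First I would verify that $W$ is feasible: by Lemma \ref{lemma7} there are no chords, so none of the degenerate cases of Observation \ref{ob-feasible} that force $W$ infeasible can occur (the ones that do would produce a chord or a short path between two boundary vertices of small list size, already excluded), and if $G'_{W,1}=G$ one checks directly that it forces a bad edge or a forbidden short chord. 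Fix an $L$-colouring $\phi$ of $G'_{W,1}-E(P)$. By Lemma \ref{lemma4}, $|P_W|\ge 5$; since $u,w\notin P$ here one has $P_W = W$ together with some boundary neighbours, and one checks $|P_W|=5$ exactly, so by Lemma \ref{lemma-nobadcycle} the target $(G_{W,2},P_W,L_\phi)$ has a bad vertex $y$.

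Next I would extract structure from the bad vertex $y$. Since $|P_W|=5$ with at most two vertices of $P$ inside, Lemma \ref{lemma-nobadcycle} (moreover part, when $|P_W\cap P|=3$) or a direct argument when fewer $P$-vertices are present shows $|L(y)|=3$ and $y$ has exactly one neighbour among $\{u,v,w\}$ (the "$W$-side" of $P_W$) and one neighbour among the remaining two vertices of $P_W$ (which are boundary neighbours of $u$ and of $w$). Using Lemma \ref{lemma-no(1,2)-2chord} one rules out $y$ being adjacent to the boundary neighbour $u^-$ or $w^+$ together with a pre-coloured or list-$2$ vertex; and using the minimality of $G_{W,2}$ together with the fact that $y$ is a new $(3^+,3)$-$2$-chord endpoint in various configurations, one forces $y$ to be adjacent to $v$. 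Iterating: $vy$ is then a shorter relevant $2$-chord (or $v$ has another neighbour playing the role of $w$), and repeating the argument each interior vertex discovered must be adjacent to $v$, which is precisely the statement that $G_{W,2}$ is built from a path all of whose odd-indexed vertices are joined to the common centre $v$ — a semi-fan. Separating $4$-, $5$- and $6$-cycles (Lemmas \ref{lemma5}, \ref{lemma6}) are used repeatedly to prevent any "extra" edges or interior vertices that would spoil the semi-fan shape: any chord of the would-be path $v_1\ldots v_{2q+1}$ or any interior vertex off the fan would create such a separating cycle.

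The main obstacle I expect is the bookkeeping around $P_W$ and its intersection with $P$: because $W$ may contain up to the full path $P$ on one side, the cases $|P_W\cap P|\in\{0,1,2,3\}$ each need the bad-vertex analysis of Lemma \ref{lemma-nobadcycle} invoked slightly differently, and one must be careful that the newly formed $2$-chords $vy$, $vy'$, \dots inherit the $(3^+,3)$ property so that minimality of $G_{W,2}$ applies. A secondary subtlety is showing that once every interior neighbour of the relevant region is attached to $v$, there are no additional edges among the path vertices $v_1,\ldots,v_{2q+1}$ and no further interior vertices — this is where triangle-freeness plus the absence of separating short cycles must be combined to conclude $G_{W,2}$ is exactly a semi-fan and nothing more. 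Once the structure is pinned down, verifying the end vertices are $v_1=u$-side and $v_{2q+1}=w$-side and that $w$ (with $|L(w)|=3$) sits as claimed is routine.
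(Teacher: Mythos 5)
There is a genuine gap at the heart of your plan. You assert that, after extracting a bad vertex $y$ of $(G_{W,2},P_W,L_\phi)$, minimality of $G_{W,2}$ "forces $y$ to be adjacent to $v$", and that iterating this yields the semi-fan. That is exactly where the argument cannot be pushed through by structural lemmas alone. The critical case is the bad vertex being adjacent to \emph{both} end vertices $u$ and $w$ (triangle-freeness does not forbid this). Then $W'=uyw$ is a $(3^+,3)$-$2$-chord with $G_{W',2}$ strictly smaller, so minimality (correctly set up as: $W$ chosen so that $G_{W,2}$ is \emph{not} a semi-fan and is smallest subject to that, not minimal over all $(3^+,3)$-$2$-chords) tells you $G_{W',2}$ is a semi-fan with center $y$, and since $G$ has no separating $4$-cycle, $uvwy$ is a face. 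The resulting configuration — the face $uvwy$ with a semi-fan of center $y$ hanging between $u$ and $w$ — is not a semi-fan with center $v$, yet it contains no chord, no forbidden $i$-chord, and no separating $4$-, $5$- or $6$-cycle; none of Lemmas \ref{lemma5}, \ref{lemma6}, \ref{lemma7}, \ref{lemma1}, \ref{lemma-no(1,2)-2chord} excludes it. The paper kills this configuration by a \emph{colouring} argument: it pins down all the list sizes along the fan (the degree bound of Lemma \ref{lemma-degree}, no bad edge, no bad $4$-cycle force $|L(y)|=4$ and alternating $2$'s and $3$'s on the path), exhibits an explicit orientation of $G_{W,2}-\{v\}$ with prescribed out-degrees, and invokes the Alon--Tarsi theorem to produce an $L_\phi$-colouring extending $\phi$, contradicting the minimal counterexample. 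Your proposal contains no substitute for this step, and without it the induction simply does not close.

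A secondary, smaller issue: the remaining cases, where the bad vertex is adjacent to $u^+$ or $w^-$ or to $v$ and is an \emph{interior} vertex, are not settled by "repeating the argument"; the paper has to pass to auxiliary $3$-chords and $4$-chords (e.g.\ $w^-xvu$ and $u^+x'vxw^-$), apply Lemma \ref{lemma-nobadcycle} again to get further bad vertices $x'$, $x''$, and only then reach separating-cycle contradictions via Lemmas \ref{lemma5} and \ref{lemma6}. Your sketch gestures at this bookkeeping but does not identify the chords one must cut along, nor why the new bad vertices cannot be adjacent to the previously found interior vertices (this uses that $S_{G,L}$ is independent, i.e.\ two adjacent interior vertices cannot both have lists of size $3$). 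So even outside the Alon--Tarsi case, the proposal as written is a plan rather than a proof; the decisive missing ingredient is the Alon--Tarsi step.
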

\begin{proof}
	Assume the lemma is not true, and $W=uvw$ is a $(3^+,3)$-$2$-chord for which $G_{W,2}$ is not a semi-fan, and subject to this, $G_{W,2}$ has minimum number of vertices.
	
	If $W$ is not a feasible $2$-chord, then it must be the case that $u^+=w^-$, and hence $G_{W,2}$ is the $4$-cycle $uvww^-$, which is a semi-fan (with $q=1$) with center $v$.
	Assume $W$ is a feasible $2$-chord. Let $\phi$ be an $L$-colouring of $G'_{W,1}-E(P)$.
	
	By Lemma \ref{lemmapw5}, $|P_W|=5$. This implies that $|L(u^+)|=|L(w^-)|=2$ and $u^+ \ne w^-$.
	By Lemma \ref{lemmabadv}, $(G_{W,2},P_W, L_{\phi})$ has a bad vertex $x$. If $x$ is adjacent to $u$ and $w$, then $W'=uxw$ is a $(3^+,3)$-$2$-chord with $|L(x)|=3$, { contrary to Lemma \ref{lemmano332chord} or Lemma \ref{lemma-34chord}. }
	If $x$ is adjacent to $u$ and $w^-$, then $W'=uxw^-$ is a $(2,3^+)$-$2$-chord, contrary to Lemma \ref{lemmanoichord}.
		If $x$ is adjacent to $u^+$ and $w^-$, then since $G$ has no $(2,2)$-$2$-chord, $x$ is a boundary vertex of $G$. Then $C=xu^+uvww^-$ is a $6$-cycle and $d_G(x) \ge |L(x)| = 3$. Hence either $x$ is adjacent to $v$, and hence $G_{W,2}$ is a semi-fan, or $C$ has a unique interior vertex $y$ adjacent to $x,u,w$. This is in contrary to the previous paragraph for the case that $x$ is adjacent to $u$ and $w$.
	
 {	Therefore $x$ is either adjacent to both $w^-$ and $v$, or adjacent to both $u^+$ and $v$.}
	
 {	By symmetry, we assume that $x$ is adjacent to $w^-$ and $v$. If  $x$ is a boundary vertex of $G$, then
	$W'=uvx$ is a $(3^+,3)$-$2$-chord. By the minimality of $W$, $G_{W',2}$ is a semi-fan with center $v$.  But then $G_{W,2}$ is also a semi-fan with center $v$.}
	
 {	 Assume $x$ is an interior vertex. Let $W'=w^-xvu$. As $w^- \ne u^+$, $W'$ is a feasible $3$-chord with $P_{W'} = w^-xvuu^+$. Let $\psi$ be an $L$-colouring of $G_{W',1}-E(P)$.
	 By Lemma \ref{lemmabadv}, $(G_{W',2},P_{W'}, L_{\psi})$ has a bad vertex $x'$. If $x'$ is adjacent to $x$, then since $|L(x')|=|L(x)|=3$ and $x$ is an interior vertex, we conclude that $x'$ is a boundary vertex. But then $w^-xx'$ is a $(2,3)$-$2$-chord of $G$, contrary to Lemma \ref{lemmanoichord}. If $x'$ is adjacent to $u^+$ and $w^-$, then the $6$-cycle $uu^+x'w^-wv$ contains the unique interior vertex $x$, which is adjacent to $w^-$ and $v$. Hence $d_G(x)=2 < |L(x)|=3$, a contradiction.}
	
 {	Since $G$ has no separating $5$-cycle, $x'$ cannot be adjacent to $u$ and $w^-$. Therefore $x'$ is adjacent to $u^+$ and $v$. If $x'$ is a boundary vertex, then $W''=wvx'$ is a $(3,3)$-$2$-chord with $G_{W'',2}$ smaller than $G_{W,2}$. Hence $G_{W'',2}$ is a semi-fan, contrary to the fact that $x$ is an interior vertex.}
	
 { Assume $x'$ is an interior vertex. Let $W'''=u^+x'vxw^-$. Then $P_{W'''}=W'''$.  Let $\tau$ be an $L$-colouring of $G_{W''',1}-E(P)$.
	 By Lemma \ref{lemmabadv}, $(G_{W''',2},P_{W''}, L_{\tau})$ has a bad vertex $x''$. If $x''$ is adjacent to $u^+$ and $w^-$, then the $6$-cycle $x''w^-wvuu^+$ contains two interior vertices, contrary to Lemma \ref{lemmasep6}. If $x''$ is adjacent to $x$, then since $|L(x'')|=|L(x)| =3$, we conclude that $x''$ is a boundary vertex. Hence $x''xw^-$ is a $(2,3)$-$2$-chord, contrary to Lemma \ref{lemmanoichord}. So $x''$ is not adjacent to $x$. Similarly $x''$ is not adjacent to $x'$. Therefore $x''$ is adjacent to $v$ and $w^-$, or adjacent to $v$ and $u^+$. In any case, $G$ has a separating $4$-cycle, contrary to Lemma \ref{lemmasep45}.}
	\end{proof}

\section{Proof of Theorem}

 Assume $B(G)=(p_kp_{k-1}\ldots p_1 v_1v_2\ldots v_s)$.

\begin{claim}
	\label{clm-nobadedge}
	For $i \ge 3$,  $v_1$ and $p_i$ have no common inner  neighbour, and for $i \le k-2$, $v_s$ and $p_i$ have no common inner neighbour.
\end{claim}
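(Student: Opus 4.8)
The plan is to argue by contradiction using the machinery of chords and $2$-chords already developed, together with the non-existence results of Lemma~\ref{lemma1}, Lemma~\ref{lemma-no(1,2)-2chord} and Lemma~\ref{lemma-2chordprecolouredvertices}, and the absence of small separating cycles (Lemma~\ref{lemma5}, Lemma~\ref{lemma6}). By symmetry it suffices to treat $v_1$; suppose $v_1$ and $p_i$ have a common neighbour $x$ for some $i\ge 3$. Since $G$ is triangle free, $v_1\ne p_i$ are non-consecutive on $B(G)$ (as $i\ge 3$), so $W=v_1 x p_i$ is a path whose endvertices are non-consecutive boundary vertices. The first point to settle is whether $x$ is an interior vertex: if $x$ were on $B(G)$, then $W$ would not be a $2$-chord, but then $v_1 x$ and $x p_i$ would be chords (or $x$ would lie strictly between $v_1$ and $p_i$ or between $p_i$ and $v_1$ on the boundary), and in the latter sub-case the cycle $v_1 x p_i \cdots$ together with the boundary arc gives a separating cycle; I would dispose of this using Lemma~\ref{lemma7} (no chords) and Lemma~\ref{lemma5}/Lemma~\ref{lemma6}. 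So we may assume $x$ is interior and $W=v_1 x p_i$ is a genuine $2$-chord.

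Next I would read off the list sizes. Since $p_i\in V(P)$ it is pre-coloured, so $|L(p_i)|=1$. For $v_1$: it is a boundary vertex not on $P$, so $2\le |L(v_1)|\le 4$. I now split on $|L(v_1)|$. If $|L(v_1)|=1$ then $W$ is a $(1,1)$-$2$-chord, directly contradicting Lemma~\ref{lemma-2chordprecolouredvertices}. If $|L(v_1)|=2$ then $W$ is a $(2,1)$-$2$-chord; reversing orientation this is a $(1,2)$-$2$-chord, and I would check the hypothesis $|P\cap B(G_{W,2})|\le 2$ of the relevant clause of Lemma~\ref{lemma1} — here $G_{W,2}$ is the side of $W$ whose boundary contains only $p_i$ and possibly a short initial segment, and since $v_1$ is the first vertex after $p_1$ on $B(G)$, the part of $P$ inside $G_{W,2}$ is at most $\{p_1,\dots,p_i\}$ on one side; one orientation of $W$ has at most two vertices of $P$ on its $G_{W,2}$-side (actually the side not containing $p_1$), giving the contradiction with Lemma~\ref{lemma1}, or else with Lemma~\ref{lemma-no(1,2)-2chord} which forbids $(1^+,2)$-$2$-chords outright once we note $|L(v_1)|=2$ makes $W$ a $(1,2)$-$2$-chord read the other way. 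If $|L(v_1)|\ge 3$, so $W$ is a $(1,3^+)$-$2$-chord, then Lemma~\ref{lemma1} forbids it provided $|P\cap B(G_{W,2})|=1$ for the appropriate side, which holds because on the side of $W$ containing $p_i$ but not $p_1$ the only boundary vertex of $P$ is $p_i$ itself (as $v_1=v_1$ is not in $P$ and $i\ge 3$ means $p_1,\dots,p_{i-1}$ lie on the other side).

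The main obstacle I anticipate is the bookkeeping in the case $i\ge 3$ when the vertices $p_1,\dots,p_{i-1}$ lie on the $G_{W,2}$-side of $W$: then $|P\cap B(G_{W,2})|$ can be as large as $3$ (when $i=3$), so the clauses of Lemma~\ref{lemma1} with the $|P\cap\cdot|\le 2$ or $=1$ hypotheses do not apply directly, and I must instead invoke Lemma~\ref{lemma-no(1,2)-2chord} (if $|L(v_1)|\ge 2$, viewing $W$ appropriately) or, when $|L(v_1)|=1$, Lemma~\ref{lemma-2chordprecolouredvertices} — the latter was specifically proved to handle $(1,1)$-$2$-chords with up to three vertices of $P$ on one side. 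I would therefore organize the proof so that the orientation of $W$ and the choice of which lemma to cite is dictated by which endvertex carries the larger initial segment of $P$: always put $v_1$'s segment (empty, since $v_1\notin P$) on the $G_{W,2}$-side, forcing $|P\cap B(G_{W,2})|\le 1$, after which every case falls to Lemma~\ref{lemma1} or Lemma~\ref{lemma-no(1,2)-2chord}. The symmetric statement for $v_s$ and $p_i$ with $i\le k-2$ follows by the mirror argument, reflecting the cyclic order of $B(G)$.
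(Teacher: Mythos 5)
Your reduction to the chord lemmas works only in part, and the step you flag as "bookkeeping" is in fact where the proposal breaks. Write $W=v_1xp_i$ (you are right that $x$ must be interior, by Lemma~\ref{lemma7}, and right that $|L(v_1)|=2$ is killed by Lemma~\ref{lemma-no(1,2)-2chord} and $i=k$ by the $(1,1^+)$-$2$-chord clause of Lemma~\ref{lemma1}; note $|L(v_1)|=1$ cannot occur at all, since $v_1\in B(G)-V(P)$ forces $|L(v_1)|\ge 2$). But the decomposition $G_{W,1},G_{W,2}$ is not yours to orient: it is fixed by the convention that $P\cap G_{W,1}$ is a path with $|V(G_{W,1})\cap P|$ maximum, and in any case \emph{both} sides of $W$ contain a nonempty segment of $P$ ending at $p_i$ — one side contains $p_1,\dots,p_i$, the other $p_i,\dots,p_k$. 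So for $3\le i\le k-1$ there is no choice making $|P\cap B(G_{W,2})|\le 1$; in the critical case $k=5$, $i=3$ both sides carry three vertices of $P$, and for $i=4$ the $G_{W,2}$ side carries two. Consequently, when $|L(v_1)|\ge 3$ and $i\in\{3,4\}$ with $i<k$, the chord $W$ is a $(1,3^+)$-$2$-chord that is forbidden by none of Lemma~\ref{lemma1}, Lemma~\ref{lemma-no(1,2)-2chord} or Lemma~\ref{lemma-2chordprecolouredvertices}; such configurations are perfectly consistent with everything proved so far, so your argument has no way to exclude them.

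This is exactly where the paper does something different: for $i=3$ it observes that $v_1p_1p_2p_3x$ is a $5$-cycle with empty interior (Lemma~\ref{lemma5}), and for $i=4$ that $v_1p_1p_2p_3p_4x$ is a $6$-cycle whose interior is empty (Lemma~\ref{lemma6} together with Lemma~\ref{lemma-2chordprecolouredvertices}, since a lone interior vertex would create a $(1,1)$-$2$-chord). It then deletes $p_2$ (resp.\ $p_3$) and adds the edge $p_1p_3$ (resp.\ $p_2p_4$), checks that the resulting triple is a smaller valid target, and invokes minimality to get an $L$-colouring that extends back to $G-E(P)$, a contradiction. Some auxiliary idea of this kind (an empty-interior argument plus a graph modification and appeal to minimality) is needed to finish the cases $i\in\{3,4\}$, $i<k$, with $|L(v_1)|\ge 3$; as written, your proposal does not close them.
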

\begin{proof}
	If  $x$ is adjacent to $v_1, p_k$, then $W=v_1xp_k$ is a $(1,3^+)$-$2$-chord with $|P \cap B(G_{W,2})|=1$, contrary to Lemma \ref{lemmanoichord}.

	If   $x$ is adjacent to $v_1, p_4$, then $C=v_1p_1p_2p_3p_4x$ is a $6$-cycle. By
 Lemma \ref{lemmasep6},  ${\rm int}(C)$ has at most one vertex. If ${\rm int}(C)$ has one vertex $z$,
 then $z$ has three neighbours in $C$, and hence is adjacent to two pre-coloured vertices. Then   $G$ has a $(1,1)$-$2$-chord, contrary to Lemma
 \ref{lemma112chord}. Thus   ${\rm int}(C)$ has no vertex. We add an edge $e=p_2p_4$ and let $G'= G-\{p_3\}$.   Let $P'=p_1p_2p_4p_5$
 and $L'(v)=L(v)$ for any vertex $v \in V(G')$.  Hence $(G', P', L)$  is a valid target. By the minimality of $G$,
   $G'-E(P')$ is $L'$-colourable. Therefore  $G-E(P)$ is $L$-colourable,  a contradiction.
	
If   $x$ is adjacent to $v_1, p_3$, then $C=v_1p_1p_2p_3x$ is a $5$-cycle. As $G$ has no separating $5$-cycle,  ${\rm int}(C)$ has no vertex. We add an edge $e=p_1p_3$ and let $G'= G-\{p_2\}$.   Let $P'=p_1p_3p_4 p_5$ and $L'(v)=L(v)$ for any vertex $v \in V(G')$.  Hence $(G', P', L)$  is a valid target. By the minimality of $G$,
  $G'-E(P')$ is $L'$-colourable. Therefore  $G-E(P)$ is $L$-colourable,  a contradiction.
\end{proof}

\begin{claim}
	\label{clm2}
Assume $X $ is a set of consecutive vertices on $ B(G)$,   $G'=G-X$, $\phi$ is a colouring of $X$, and    $L'$ is obtained from the restriction
of $L$ to $G'$ by removing, for each $v \in X$,   the colour of $v$ from $L(u)$ for each neighbour $u$ of $v$.
If $C=y_1y_2y_3y_4$  is a bad $4$-cycle in $(G',P, L')$ and  only one vertex $v \in   X$ has a neighbour $y_i$ in $C$, then $vy_i$ is a boundary edge of $G$, and   $|L'(y_i)| = |L(y_i)|-1=3$.
\end{claim}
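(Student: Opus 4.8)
The plan is to compare the role that $C$ plays in $(G',P,L')$ with its role in $(G,P,L)$, using crucially that the minimum counterexample $(G,P,L)$ is a valid target, hence has no bad $4$-cycle. First relabel the vertices of the bad $4$-cycle so that $y_4$ is interior in $G'$, $y_1y_2y_3$ are consecutive on $B(G')$, $|L'(y_1)|=|L'(y_3)|=|L'(y_4)|=3$ and $|L'(y_2)|=2$. Deleting vertices of $B(G)$ only enlarges the outer face, so $y_4$, being interior in $G'$, is interior in $G$. Since $v$ is the only vertex of $X$ with a neighbour in $C$ and that neighbour is $y_i$, we have $L'(y_j)=L(y_j)$ for every $j\ne i$, while $L'(y_i)$ is $L(y_i)$ with at most the single colour $\phi(v)$ removed. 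Finally $C$ is a $4$-cycle of $G$, so, $(G,P,L)$ being valid, $C$ is not a bad $4$-cycle of $(G,P,L)$.

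The main step — which I expect to be the real obstacle — is to show that the incidence pattern of the bad $4$-cycle already holds in $G$, i.e.\ that $y_1,y_2,y_3$ are consecutive vertices of $B(G)$ (with $y_4$ interior, which we already have). If all three lie on $B(G)$ this follows at once from Lemma \ref{lemma7}: $y_1y_2$ and $y_2y_3$ are then edges joining boundary vertices, hence boundary edges, so the three vertices are consecutive on $B(G)$. So one must exclude the possibility that some vertex of $\{y_1,y_2,y_3\}$ is interior in $G$ and only exposed upon deleting $X$. In that case $C$ is a $4$-cycle of $G$ with at least two interior vertices, so by Lemma \ref{lemma5} it bounds a face of $G$, which forces the relevant degrees. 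One then argues by cases on which of $y_1,y_2,y_3$ is interior: if an endpoint $y_1$ or $y_3$ of the bad cycle is interior, then either it is adjacent to $v$, giving a $(1^+,2)$-$2$-chord forbidden by Lemma \ref{lemma-no(1,2)-2chord}, or it is not, in which case it lies in the independent set $S_{G,L}$ together with its neighbour $y_4$ (using $i\ne 4$, established below), a contradiction; and if $y_2$ is interior, then the two halves $y_1y_2y_3$ and $y_1y_4y_3$ of $C$ are $2$-chords through the same boundary pair with one endpoint of list exactly $3$, so Lemma \ref{lemma-semifan} makes both their inner parts semi-fans attached along the same boundary arc, which cannot coexist in a plane graph (the interiors overlap although each is subdivided only into faces). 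Lemma \ref{lemma6} is used to control the case of a $6$-cycle arising in this analysis, and the triangle-freeness of $G$ is used throughout.

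Now $y_1,y_2,y_3$ are consecutive on $B(G)$ and $y_4$ is interior in $G$, so $C$ has exactly the incidence pattern of a bad $4$-cycle of $(G,P,L)$; as it is not one, its list-size pattern must fail. For $i\in\{1,2,3\}$ the edge $vy_i$ joins two vertices of $B(G)$, so by Lemma \ref{lemma7} it is a boundary edge and $v$ is a boundary neighbour of $y_i$; if $i=2$ this forces $v\in\{y_1,y_3\}$, impossible since $y_1,y_3\notin X$. If $i=4$, then since $y_4$ is interior in $G$ and $v\in B(G)$ the edge $vy_4$ is not a boundary edge, so both faces incident with it at $v$ are inner faces; deleting $v$ merges all inner faces at $v$ into the outer face, so $y_4\in B(G-v)\subseteq B(G')$, contradicting that $y_4$ is interior in $G'$. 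Hence $i\in\{1,3\}$ and $vy_i$ is a boundary edge of $G$. Finally, the list-size pattern of $C$ in $L$ agrees with that of a bad $4$-cycle except possibly at $y_i$, and since it must fail we need $|L(y_i)|\ne|L'(y_i)|=3$; as $L'(y_i)\subseteq L(y_i)$ this forces $|L(y_i)|=4$, i.e.\ $\phi(v)$ was deleted from $L(y_i)$ and $|L'(y_i)|=|L(y_i)|-1=3$, which is the assertion of the claim.
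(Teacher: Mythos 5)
Your proposal is correct and takes essentially the same route as the paper's proof: it rules out $i=4$ (a neighbour of a deleted boundary vertex would lie on $B(G')$), shows $y_1,y_2,y_3$ all lie on $B(G)$ via the $(1^+,2)$-$2$-chord lemma together with the independence of $S_{G,L}$ for the endpoints and via Lemma \ref{lemma-semifan} applied to the two $2$-chords $y_1y_2y_3$ and $y_1y_4y_3$ for the middle vertex, and then uses chordlessness (Lemma \ref{lemma7}) plus the fact that $(G,P,L)$ has no bad $4$-cycle to force $i\in\{1,3\}$, $vy_i$ a boundary edge, and $|L(y_i)|=4$. The only deviations are cosmetic: your appeal to Lemma \ref{lemma6} is not actually needed, and your final "the pattern must fail at $y_i$" step merely makes explicit what the paper leaves implicit.
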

\begin{proof}
	Since $G$ has no bad $4$-cycle, some vertex in $X$ has a neighbour in $C$. By our assumption this vertex $v$ is unique.
	
	Assume $y_1,y_2,y_3$ are three consecutive vertices on $B(G')$ and $y_4$ is an interior vertex of $G'$ (hence $|L'(y_2)|=2$ and $|L'(y_1)| = |L'(y_3)| = |L'(y_4)|=3$).
	
   Since $y_4$ is an interior vertex of $G'$, $y_4$ is not adjacent to $v$. Hence $|L(y_4)| = |L'(y_4)| =3$.
   If $y_1$ is an interior vertex of $G$, then $|L(y_1)| =4$ (as $X_{G,L}$ is an independent set). Hence $L'(y_1) \ne L(y_1)$,
   and $e=vy_1$ is an edge of $G$. Then $v$ is not adjacent to $y_2$ (as $G$ is triangle free).
   Hence $y_2$ is a boundary vertex with $|L(y_2)|=|L'(y_2)|=2$. But then $vy_1y_2$ is a $(1^+,2)$-$2$-chord, contrary to Corollary \ref{cori22chord}.
   Therefore $y_1$ is a boundary vertex. Similarly, $y_3$ is a boundary vertex. If $y_2$ is an interior vertex, then  $W_1=y_1y_2y_3$ and $W_2=y_1y_4y_3$ are  $(3,3)$-$2$-chords. By Lemma \ref{lemma-semifan}, $G_{W_1,2}$ and $G_{W_2,2}$ are semi-fans, and hence contain no interior vertex. However, $y_4$ is an interior vertex of $G_{W_1,2}$ or $y_2$ is an interior vertex of $G_{W_2,2}$, a contradiction. So $y_2$ is a boundary vertex of $G$. As $G$ has no chord, $y_1y_2,y_2y_3$ are boundary edges of $G$.

   This implies that $vy_2$ is not an edge. By symmetry, we may assume that $vy_1$ is an edge of $G$. As $G$ has no chord, $vy_1$ is a boundary edge, with $|L'(y_1)| =|L(y_1)|-1 =3$.
\end{proof}

\begin{claim}
	\label{clm-badedge}
	Assume $X $ is a set of at most three consecutive vertices on $ B(G)$ satisfying one of the following conditions:
		\begin{itemize}
			\item[(1)] $X=\{v_1\}$.
			\item[(2)] $X=\{v_1, v_2\}$, $L(v_1)=L(v_2)\cup L(p_1)$ and $|L(v_2)|=2$.
			\item[(3)] $X=\{v_1, v_2, v_3\}$, $L(v_1)=L(v_2)\cup L(p_1)$ with $|L(v_2)|=2$,  $L(v_2) \subseteq L(v_3)$ and  $|L(v_4)|=2$.
		\end{itemize}
	Assume $G'=G-X$, $\phi$ is a colouring of $X$, and    $L'$ is obtained from the restriction
	of $L$ to $G'$ by removing, for each $v \in X$,   the colour of $v$ from $L(u)$ for each neighbour $u$ of $v$.
		Then if $e=xy$ is a bad of $(G', P, L')$, then $e$ is a boundary edge of $G$;    if $x$  is a bad vertex in $(G', P, L')$, then $x$ is adjacent to a vertex of $X$ and a vertex in $P$.
\end{claim}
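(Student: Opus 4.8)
The plan is to split on which of the three defining clauses of ``bad edge'' the edge $e=xy$ satisfies, using throughout the following observation. The path $P$, and the predicates ``is an end vertex of $P$'', ``lies in $V(P)$'', ``$|P|\in\{3,4\}$'' and ``$|P|=5$'', are identical in $(G,P,L)$ and $(G',P,L')$, and $L'(u)\subseteq L(u)$ for all $u\in V(G')$; hence if neither endpoint of $e$ had a strictly smaller list in $L'$, the edge $e$ would already be bad in $(G,P,L)$, contradicting validity. So some endpoint $z$ of $e$ satisfies $|L'(z)|<|L(z)|$, and since lists only shrink by deletion of the colour $\phi(v)$ of a neighbour $v\in X$, such a $z$ is adjacent to a vertex of $X$.

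If $e$ satisfies the second or third clause, then one endpoint, say $q$, lies in $V(P)$ while the other, $r$, has $|L'(r)|=2$; if $r$ were not shrunk then $|L(r)|=2$ and $e$ would be bad in $(G,P,L)$, so $r$ is shrunk, hence adjacent to $X$, and the conclusion holds with $x:=r$, $y:=q$. The real content is therefore the first clause, $|L'(x)|=|L'(y)|=2$: since $(G,P,L)$ has no bad edge, at most one endpoint has $|L|=2$, and relabelling (the case is symmetric) we may take $|L(x)|\ge3$, so $x$ is shrunk, has a neighbour $v^{\ast}\in X$, and satisfies $|L(x)|=3$ unless it is adjacent to two vertices of $X$ receiving distinct colours (then $|L(x)|=4$). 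It remains to show $y\in V(P)$.

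Here one first locates $x$. If $x\in B(G)$ then $xv^{\ast}$ has both ends on $B(G)$, so by Lemma \ref{lemma7} it is a boundary edge and $x$ is a $B(G)$-neighbour of $v^{\ast}$; as $x\notin X$ and $X$ is a boundary arc beginning at $v_1$, either $x=p_1$, or $x$ is the boundary vertex just after the last vertex of $X$ — which in case (3) is $v_4$ with $|L(v_4)|=2<3$, impossible. One then examines $y$ (a neighbour of $x$ with $|L'(y)|=2$). If $y$ is interior, then $|L(y)|\ge3$, so $y$ is shrunk too and adjacent to some $v^{\ast\ast}\in X$; triangle-freeness forbids $v^{\ast\ast}$ from being a $B(G)$-neighbour of $x$, so $x,y$ together with the sub-arc of $X$ joining $v^{\ast}$ and $v^{\ast\ast}$ form a cycle of length at most $5$, which by Lemmas \ref{lemma5} and \ref{lemma6} is a face; extracting the short chord $W$ through $x$ and $y$ and computing $G_{W,2}$ and $P_W$ then yields $|P_W|\le4$ for a feasible $W$ — contradicting Lemma \ref{lemma4} — where the precise list data of clauses (2) and (3) (namely $L(v_1)=L(v_2)\cup L(p_1)$, $|L(v_2)|=2$, $L(v_2)\subseteq L(v_3)$, $|L(v_4)|=2$, together with $|L(p_1)|=3$) are exactly what determine which boundary vertices of $G_{W,2}$ belong to $P_W$; the degenerate possibility $|L(v^{\ast\ast})|=2$ is excluded instead by Lemma \ref{lemma-no(1,2)-2chord}. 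If $y\in B(G)$, then $xy$ is a boundary edge by Lemma \ref{lemma7}, so $y$ is a $B(G)$-neighbour of $x$ outside $X$; reading off $B(G)=(p_k\cdots p_1v_1\cdots v_s)$ from the position of $x$, using Claim \ref{clm-nobadedge} and the list conditions to rule out $x$ being the far boundary-neighbour of $X$, one is left with $x=p_1$ and hence $y=p_2\in V(P)$, as required.

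The step I expect to be the main obstacle is the interior–interior subcase of the first clause — $e=xy$ with $x,y$ both interior and both adjacent to $X$. There one must correctly locate the short chord among $v^{\ast},x,y,v^{\ast\ast}$ and the vertices of $X$, identify its two sides, and above all compute $P_W$; this is the single place where the fine list conditions in (2) and (3) are used, and turning them into the needed contradiction requires combining them with the absence of small separating cycles (Lemmas \ref{lemma5}, \ref{lemma6}) and the bound $|P_W|\ge5$ for feasible chords (Lemma \ref{lemma4}), sub-case by sub-case.
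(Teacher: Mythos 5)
There is a genuine gap, and it sits exactly where you predicted the difficulty would be, in the case $|L'(x)|=|L'(y)|=2$. First, your stated goal for that case is wrong: if both endpoints have lists of size $2$ in $L'$, then neither endpoint can lie in $P$ (vertices of $P$ are pre-coloured, so their $L'$-lists are singletons), so one cannot ``show $y\in V(P)$'' — one must show this case simply cannot occur, i.e.\ derive a contradiction. Your closing step for the boundary sub-case, ``one is left with $x=p_1$ and hence $y=p_2\in V(P)$, as required,'' is therefore incoherent: $p_1,p_2$ have singleton lists and cannot be endpoints of such a bad edge. Second, the central configuration of this case is missing. You only treat ``$y$ interior'' and ``$y\in B(G)$ with $xy$ a boundary edge by Lemma \ref{lemma7}'', but the latter deduction is valid only when $x$ is also a boundary vertex; the configuration that actually carries the weight is $x$ interior (adjacent to a vertex $v_i$ of $X$, with $|L(x)|=3$) and $y$ a boundary vertex with $|L(y)|=2$, $y\notin P$. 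The paper kills this by observing that $v_ixy$ is then a $2$-chord with an endpoint of list size $2$, contradicting Lemma \ref{lemma-no(1,2)-2chord} (together with Lemma \ref{lemma1}); this argument does not appear in your proposal, and nothing you cite replaces it.

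Third, in the sub-case where both endpoints are adjacent to $X$, your route ``extract a short chord $W$ through $x,y$, compute $P_W$, get $|P_W|\le4$ for a feasible $W$, contradict Lemma \ref{lemma4}'' does not go through: the natural chord here has both ends in $X$ and its small side is, by Lemmas \ref{lemma5} and \ref{lemma6}, a face whose only extra boundary vertex (e.g.\ $v_2$, with $|L(v_2)|=2$) is forced into $P_W$, so either $W$ is not feasible at all or $|P_W|=5$ — no contradiction with Lemma \ref{lemma4} arises. The paper eliminates these configurations by quite different means, and this is precisely where the list hypotheses in (2) and (3) are used: in case (2) one shows $y=v_3$, $|L(v_1)|=|L(x)|=|L(v_3)|=3$, $|L(v_2)|=2$, so $v_1xv_3v_2$ is a bad $4$-cycle of $(G,P,L)$, contradicting validity; in case (3) one shows $y=v_4$ and that the resulting $4$- or $5$-cycle is a face, forcing $d_G(v_3)=2<|L(v_3)|$, contradicting Lemma \ref{lemma-degree}. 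Also note your boundary-location step for $x$ only rules out ``the vertex just after $X$'' in case (3); in cases (1) and (2) that vertex ($v_2$, resp.\ $v_3$) is never excluded nor revisited. Your handling of the two clauses with an endpoint in $P$ is fine and matches the paper, but the core of the claim is the clause you deferred, and the arguments sketched for it would not close.
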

\begin{proof}
If $x$ is a bad vertex in $(G', P, L')$, then as $(G,P,L)$ has no bad vertex, it follows that $x$ is adjacent to a vertex in $X$ and a vertex in $P$. It remains to show that $(G', P, L')$ has no bad edge.

 Assumme $e=xy$ is a bad edge and $e$ is not a boundary edge of $G$. If $X=\{v_1\}$, then since $(G,P,L)$ has no bad edge and $G$ is triangle free, exactly one of $x,y$ is adjacent to $v_1$, say $xv_1 \in E(G)$. Then $L(y)=L'(y)$ and $|L(y)|=2$.  Hence $y$ is a vertex on $B(G)$. Since $xy$ is not a boundary edge of $G$, $x$ is an interior vertex and  $v_1xy$ is $(2,3^+)$-$2$-chord, contratry  to Corollary \ref{cori22chord}.  

Assume $X=\{v_1,v_2\}$.   If both $x$ and $y$ are adjacent to vertices  of $X$, then we assume $x$ is adjacent to $v_1$ and $y$ is adjacent to $v_2$.   As $X_{G,L}$ is an independent set,  $y$ is a vertex on $B(G)$.   As $G$ has no chord,   $ y=v_3$ and  $|L(v_3)|=3$. In this case,  $v_1xy_3v_4$ is a bad $4$-cycle in $(G,P,L)$, a contradiction.     Hence  only one of $x$ and $y$ is adjacent to a vertex of $X$. We assume $x$ is adjacent to  a vertex of $X$  and $L(y)=L'(y)$.  Then $|L(y)|=2$ and $y$ is a vertex on $B(G)$. Similarly, $x$ is an interior vertex and  $v_1xy$ is $(2,3^+)$-$2$-chord, contrary  to Corollary \ref{cori22chord}.

  Assume $X=\{v_1,v_2,v_3\}$,  $L(v_1)=L(v_2)\cup L(p_1)$ with $|L(v_2)|=2$,  $L(v_2) \subseteq L(v_3)$ and  $|L(v_4)|=2$.  If both $x$ and $y$ are adjacent to vertices  of $X$, as $X_{G,L}$ is an independent set,  then one of $x$ and $y$ is a boundary vertex on $B(G)$.  We assume $y$ is  the boundary vertex on $B(G)$.   As $G$ has no chord,   $ y=v_4$ and  $v_1xyv_3v_2$ is $5$-cycle or $v_2xyv_3$ is a $4$-cycle. In the any case $d_G(v_3)=2 < |L(v_3)|$, contrary to Lemma \ref{lemmaldv}.    Hence  only one of $x$ and $y$ is adjacent to a vertex of $X$. We assume $x$ is adjacent to  a vertex of $X$  and $L(y)=L'(y)$.  Then $|L(y)|=2$, and $y$ is a vertex on $B(G)$. Similarly, $x$ is an interior vertex and  $v_1xy$ is $(2,2^+)$-$2$-chord, contrary  to Corollary \ref{cori22chord}.  
\end{proof}

\bigskip
\noindent
{\bf Case 1} $s =0$ or $s\ge 2$  and $|L(v_2)| \ge 3$.

 	Let $L'$ be the list assignment of $G'=G- \{p_1\}$ obtained from $L$ by deleting the colour of $p_1$ from the lists of   neighbors of $p_{1}$.
 By Lemmas \ref{lemma-no(1,2)-2chord} and \ref{lemma112chord},
 $(G', P-\{p_1\}, L')$ has no bad edge and no bad vertex.
 By  Claim \ref{clm2}, if $(G', P-\{p_1\}, L')$ has a bad $4$-cycle, then   $s \ge 3$ and $|L(v_2)|=2$, contrary to our assumption.
 	
 Hence $(G', P-\{p_1\}, L')$ is  a valid target, and  $G'-E(P-\{p_1\})$ has an $L'$ colouring, which extends to an $L$-colouring of $G-E(P)$. This completes the proof of Case 1.

 	\bigskip
 	\noindent
 	{\bf Case 2} $s =1$ or    $s \ge 2$ and $|L(v_2)|=2$ and $L(v_1) - (L(v_2) \cup L(p_1)) \ne \emptyset$.
 		
   We colour $v_1$  by a colour  $c \in  L(v_1)-(L(p_1) \cup L(p_k))$ (if $s=1$)   or $c  \in L(v_1)-(L(v_2) \cup L(p_1))$ (if $s \ge 2$) and
   delete the colour of $v_1$ from the lists of neighbours of $v_1$.
   Let $L'$ be the resulting list assignment of $G'=G-\{v_1\}$.
 	It is obvious that  $(G',P,L')$ has no bad 4-cycle and no bad edge.

 If $(G',P,L')$ has a bad vertex $x$, then  $x$ is adjacent to $v_1$.  By Lemma \ref{lemma112chord},   $(G,P,L)$ has no $(1,1)$-$2$-chord.  So $x$ has one neighbour in $P$, with $|L'(x)|=2$ (and hence $|L(x)|=3$).  By Claim \ref{clm-nobadedge}, $x$ is adjacent to $p_2$.   Let  $G''=G'-\{ p_1\}$.   We pre-colour $x$ and let $L''$ be the resulting list assignment of $G''$.  Let $P''$ be the pre-coloured  vertices in $G''$, i.e.,
$P''=xp_2, \ldots, p_5$.    If $(G'',P'',L'')$ has a bad vertex $x'$,  then $x'$ is adjacent to $x$, hence $|L(x')|=4$. This implies that $x'$ has two neighbours in $P$,   contrary to Lemma \ref{lemma112chord}. Hence $(G'',P'',L'')$ has no bad vertex.   Again it is easy to see that   $(G',P'',L'')$ has no bad $4$-cycle and no bad edge.  Therefore $G''-E(P'')$ has an $L''$-colouring $\psi$,   which  extends to an $L$-colouring of $G-E(P)$.
This completes the proof of Case 2.

\bigskip

In the remainder of this section, we assume that $s \ge 2$, $|L(v_2)|=2$ and $L(v_1)=L(v_2) \cup L(p_1)$.

 \bigskip
 \noindent
 {\bf Case 3}    $L(v_2) \not\subseteq L(v_3)$ or   $|L(v_4)| \ge 3$.
 		
 	If  $L(v_2) \not\subseteq L(v_3)$, then we colour $v_2$ by a colour $c_2 \in L(v_2) - L(v_3)$ and colour $v_1$ by a colour $c_1 \in L(v_1) - (L(p_1) \cup \{c_2\})$, and for $i=1,2$, delete colour $c_i$ from the lists of neighbours of $v_i$.
 	If   $|L(v_4)| \ge 3$, then we colour $v_2$ by a colour $c_2 \in L(v_2)  $ and colour $v_1$ by a colour $c_1 \in L(v_1) - (L(p_1) \cup \{c_2\})$,  and for $i=1,2$, delete colour $c_i$ from the lists of neighbours of $v_i$.

 Let  $G'=G-\{v_1, v_2\}$ and $L'$ be the resulting list assignment of $G'$.   It is obvious that $(G',P,L')$ is a target, and has no bad edge.

 Assume $(G', P, L')$ has a bad $4$-cycle $C=y_1y_2y_3y_4$  with $|L'(y_1)|=|L'(y_3)|=|L'(y_4)|=3$ and $|L'(y_2)|=2$ and  $y_4$ be an interior vertex of $G'$  and $y_1,y_2,y_3$ be consecutive vertices on  $B(G')$.

 Since $G$ has no bad $4$-cycle,    there is at least one edge between $\{v_1, v_2\}$ and   $\{y_1, y_2, y_3\}$.  As $G$ has no triangle and no separating $4$-cycle,  each of $v_1$ or $v_2$ is adjacent to at most one of the $y_i$'s.

 \bigskip
 \noindent
 {\bf Case 3(i)}  $v_1$ is adjacent  $y_i$ and   $v_2$ is adjacent  $y_j$.

 Since $G$ has no separating $5$-cycle,  $y_iy_j \in E(G)$. So by symmetry, we may assume that $v_1y_1, v_2y_2 \in E(G)$.
 Then $y_3 \in B(G)$. If $y_2$ is an interior vertex of $G$, then
  $W=v_2y_2y_3$ is a $(2,3^+)$-$2$-chord, contrary to Corollary \ref{cori22chord}. Thus we assume that $y_2$ is a boundary vertex. As $G$ has no chord, the edge $v_2y_2$ is a boundary edge, i.e., $y_2=v_3$ and $y_3=v_4$. We must have $L(v_2) \subseteq L(v_3)$, for otherwise, we could have chosen the colour $c \in L(v_2)-L(v_3)$ and hence $|L'(v_3)|=|L(v_3)| \ge 3$, contrary to the assumption that $|L'(y_2)|=2$. Let $G''$ be obtained from $G$ by identifying $v_1$ and $v_3$ into a single vertex $v^*$. Since $v_1, v_3$ have a common neighbour $y_1$, we know that $G''$ is triangle free (for otherwise $G$ would have a separating $5$-cycle). Let $L''(v)=L(v)$ for all $v \ne v^*$ and $L''(v^*) = L(v_2) \cup L(p_1)$. Then $(G'', P, L'')$ is a valid target, and hence $G''$ has an $L''$-colouring $\psi$. Since $\psi(p_1) \in L(p_1)$, we know that $\psi(v^*) \in L(v_2) \subseteq L(v_1) \cap L(v_3)$. Thus $\psi$ can be extended to an $L$-colouring of $G$ by assigning colour $\psi(v^*)$ to $v_1$ and $v_3$.

  \bigskip
  \noindent
  {\bf Case 3(ii)} Exactly one of $v_1$ and $v_2$ is adjacent to one of the $y_i$'s.

If $y_1$ is an interior vertex of $G$, then $|L(y_1)| = 4$ (as $X_{G,L}$ is an independent set). Hence $y_1$ is adjacent to $v_1$ or $v_2$, and hence $v_1y_1y_2$ or $v_2y_1y_2$ is a $2$-chord, contrary to Corollary \ref{cori22chord} (note that $L(y_2)=L'(y_2)$).

Thus $y_1$ is a boundary vertex of $G$, and similarly $y_3$ is  boundary vertex of $G$. If $y_2$ is an interior vertex, then $y_2$ is adjacent to $v_i$ for some $i \in \{1,2\}$. Then $W_1=v_iy_2y_1$ and $W_2=v_iy_2y_3$ are $2$-chords of $G$. By Corollary \ref{cori22chord},
both are $(3^+,3^+)$-$2$-chords, and hence $v_i=v_1$.
By Lemma \ref{lemma-semifan}, $G_{W_1,2}$ and $G_{W_2,2}$ are semi-fans, and hence contains no interior vertex. However, $y_4$ is an interior vertex of $G_{W_1,2}$ or $G_{W_2,2}$, a contradiction.

So $y_1,y_2, y_3$ are consecutive  boundary vertices of $G$. As $G$ has no chord,    $y_2$ is not adjacent to $v_1$ or $v_2$, and $v_1$ is not adjacent to $y_1$ or $y_3$. By symmetry, we may assume that   $y_1v_2$ is a boundary edge and hence $y_1=v_3$ and $y_2=v_4$. There exists $c \in L(v_3)$ such that  $L'(v_3) = L(v_3)-\{c\}$, for otherwise, $y_1y_2y_3y_4$ is a bad $4$-cycle in $(G,P,L)$. This implies that $L(v_2) \subseteq L(v_3)$ (for otherwise we would have chosen $c \in L(v_2) -L(v_3)$). Hence by our assumption, $|L(v_4)| \ge 3$, contrary to the fact that $|L'(y_2)|=|L(y_2)|=2$.

Hence $(G',P,L')$ has no bad $4$-cycle. Therefore 
  $(G',P,L')$ has a bad vertex $x$. Then $x$ is adjacent to $v_1$ and $p_2$, and $|L(x)|=3$, $|L'(x)|=2$.   Let  $G''=G'-\{ p_1\}$, and we pre-colour $x$ and let $L''$ be the resulting list assignment of $G''$.  Let $P''$ be the pre-coloured  vertices in $G''$, i.e.,
$P''=xp_2 \ldots p_5$.    Assume $(G'',P'',L'')$ has a bad vertex $x'$.
Then   $x'$  is adjacent to $x$ and one vertex of $P$. Since $G$ has no chord, $x'$ is an interior   vertex. Hence $|L (x')| = 4$, a contradiction. Thus  $(G'',P'',L'')$ has no bad vertex.  

  As $L''(v)=L'(v)$ except that $x$ is a pre-coloured vertex and $(G', P, L')$ has no  bad $4$-cycle,   $(G'', P'',L'')$ has no bad $4$-cycle, no bad edge. Therefore $G''-E(P'')$ has an $L''$-colouring $\psi$,  which  extends to an $L$-colouring of $G-E(P)$, a contradiction.
This completes the proof of Case 3.

 		\bigskip
 		\noindent
 		{\bf Case 4}   $L(v_2) \subseteq L(v_3)$ and   $|L(v_4)| = 2$.

Note that if $y$ is a common neighbour of $v_3$ and $p_i$, then $W=p_iyv_3$ is feasible. So it follows from Lemma \ref{lemma-13chord} that for $i=1,2,4,5$, $v_3$ and $p_i$ have no common neighbour $y$ with $|L(y)|=3$.

 			 {If $L(v_2)=L(v_4)$, then colour $v_3$ by a colour $c \in L(v_3)-L(v_2)$.   Let $G'=G-\{v_3\}$   and let $L'$ be the restriction of $L$ to $G'$ by deleting $c$ from the lists of the neighbors of $v_3$. As $L'(v_2)=L(v_2)=L(v_4)=L'(v_4)$, $(G', P,L')$ is a target. It is obvious that  $(G', P,L')$ has no bad edge and no bad $4$-cycle.  If $(G', P,L')$ has no bad vertex, then 
      $G'-E(P)$ has an $L'$-colouring $\phi$, which extends to an $L$-colouring of $G-E(P)$. 
     Assume $(G', P,L')$ has a bad vertex $y$. As observed above,   $y$ is a common neighbor of $p_3$ and $v_3$ and $|L(y)|=3$.  
    In this case, we pre-colour $y$. The pre-coloured vertices divide $G'$ into two parts $G'_1$
    with pre-coloured vertices $P'_1=p_1p_2p_3y$ and $G'_2$ with pre-coloured vertices $P'_2=yp_3p_4p_5$. Let $L'_j$ be the restriction of $L$ to $G'_j$, except that $y$ is pre-coloured. Then it is obvious that $(G'_j,P'_j, L'_j)$  is a valid target, and hence $G'_j-E(P_j)$ has a proper $L'_j$-colouring, whose union is a proper $L'$-colouring of $G'-E(P)$, a contradiction. 
     
   { If $L(v_4) \ne L(v_2)$, then colour $v_1$ and $v_3$ by a same colour $c \in L(v_2) - L(v_4)$, and colour $v_2$ by the other colour in $L(v_2)$. Let $G'=G-\{v_1,v_2,v_3\}$ and let $L'$ be obtained from the restriction of $L$ to $G'$ by deleting the colours of $v_1, v_2, v_3$ from the lists of their neighbours.} 
 			
 			Then  $X_{G',L'} \subseteq X_{G,L}$ is an independent set.
 			
 		 Note that $v_1$ and $v_3$ may have a common neighbour $w$, but since $v_1$ and $v_3$ are coloured the same colour, we have $|L'(w)| \ge |L(w)|-1$. 	Thus  any vertex of $G'$ adjacent to a coloured vertex  is a boundary vertex of $G'$, and   has lost at most one colour from its list, Hence   $(G', P, L')$ is a target.  Similarly, $(G',P,L')$ has no bad edge.

\begin{lemma}
	\label{clm-no4cycle}
	$(G', P, L')$ has no bad $4$-cycle.
\end{lemma}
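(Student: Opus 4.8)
The plan is to assume, for contradiction, that $(G',P,L')$ has a bad $4$-cycle $C=y_1y_2y_3y_4$, with $y_1,y_2,y_3$ consecutive on $B(G')$, $y_4$ interior in $G'$, $|L'(y_1)|=|L'(y_3)|=|L'(y_4)|=3$ and $|L'(y_2)|=2$. Since $v_1,v_2,v_3$ are consecutive on $B(G)$, deleting them places every interior-in-$G$ neighbour of $v_1,v_2$ or $v_3$ on $B(G')$; hence any interior vertex of $G'$ is interior in $G$ and adjacent to none of $v_1,v_2,v_3$. Applied to $y_4$ this gives $|L(y_4)|=3$ and $y_4\in S_{G,L}$, so $y_1,y_3\notin S_{G,L}$ because $S_{G,L}$ is independent. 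Also, as $(G,P,L)$ has no bad $4$-cycle and $L'$ only deletes $c_1,c_2,c_3$ from neighbours of $v_1,v_2,v_3$, some $v_i$ has a neighbour on $C$, necessarily among $y_1,y_2,y_3$; and since $G$ is triangle-free, each $v_i$ has at most one neighbour on $C$, distinct $v_i$'s having distinct neighbours there.

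The first main step will be to prove $y_1,y_2,y_3\in B(G)$. Suppose $y_1\notin B(G)$; being on $B(G')$ it has a neighbour $v_a$, and being interior in $G$ with $y_1\sim y_4\in S_{G,L}$ it has $|L(y_1)|=4$. Since $G$ is triangle-free, $v_a\not\sim y_2$. If $y_2\in B(G)$ with no neighbour among $v_1,v_2,v_3$, then $|L(y_2)|=2$ and $v_ay_1y_2$ is a $(1^{+},2)$-$2$-chord, contradicting Lemma~\ref{lemma-no(1,2)-2chord}; if $y_2\in B(G)$ but $y_2\sim v_b$ (so $b\neq a$), then (Lemma~\ref{lemma7}) $v_by_2$ is a boundary edge, a short cycle on $v_a,\dots,v_b,y_1$ of length $4$ or $5$ must be a face by Lemma~\ref{lemma5}, and I would derive a contradiction from the forced degrees via Lemma~\ref{lemma-degree} (note $|L(v_3)|\ge3$, since $|L(v_4)|=2$ and $(G,P,L)$ has no bad edge), or from $L(v_1)=L(v_2)\cup L(p_1)$, $|L(v_2)|=2$ together with the no-bad-edge condition at $p_1v_1$. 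The remaining possibility — $y_2$ also interior in $G$, $y_2\sim v_b$ with $b\neq a$, and $v_av_by_2y_1$ a face of $G$ — is the delicate one, to be handled by a local analysis around $v_1v_2v_3$ using the degree forced by $|L(y_1)|=4$, the membership $y_2\in S_{G,L}$ when $|L(y_2)|=3$, the independence of $S_{G,L}$, and the absence of separating $4$- and $5$-cycles. This gives $y_1\in B(G)$, and symmetrically $y_3\in B(G)$. Then $|L(y_1)|=3$: for $|L(y_1)|=4$ would require a colour deleted from $L(y_1)$, i.e.\ $y_1\sim v_i$ along a boundary edge (Lemma~\ref{lemma7}), so $y_1\in\{p_1,v_4,p_k\}$; but none of these loses a colour, because we chose $c_1\notin L(p_1)$, $c_3\notin L(v_4)$ and $c_3\notin L(p_k)$. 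Similarly $|L(y_3)|=3$, so if $y_2$ were interior then $y_1y_2y_3$ and $y_1y_4y_3$ would be $(3,3)$-$2$-chords; by Lemma~\ref{lemma-semifan} the subgraphs they cut off would be semi-fans, which contain no interior vertex beyond their centre, contradicting that $y_4$ (resp.\ $y_2$) is interior in the relevant one. Hence $y_2\in B(G)$, and since $G$ has no chord, $y_1y_2$ and $y_2y_3$ are boundary edges, so $y_1,y_2,y_3$ are consecutive on $B(G)$.

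The second main step locates $y_1y_2y_3$ on $B(G)=(p_k\ldots p_1v_1\ldots v_s)$. Some $v_i$ is adjacent to one of $y_1,y_2,y_3$, along a boundary edge by Lemma~\ref{lemma7}, so that $y_j$ is a boundary-neighbour of the block $v_1v_2v_3$ surviving in $G'$; its other neighbour on $B(G)$ is a deleted vertex, so $y_j$ is an endpoint of $C$, not the middle vertex $y_2$. Thus $y_1y_2y_3$ is a block of three consecutive boundary vertices abutting $v_1v_2v_3$: either $v_4$ is among $y_1,y_2,y_3$, or else they lie on the $P$-side of $v_1$ (within $P$, or extending from $P$ around to the far end of $B(G)$ when $|P|$ is small). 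If $v_4$ is among $y_1,y_2,y_3$ it is an endpoint of $C$, so $|L'(v_4)|=3$ is required; but $v_1,v_2\not\sim v_4$ (triangle-freeness) and $c_3\notin L(v_4)$, so $|L'(v_4)|=|L(v_4)|=2$ — contradiction. Otherwise, using $c_1\notin L(p_1)$, $c_3\notin L(p_k)$, Lemma~\ref{lemma7}, and Claims~\ref{clm-nobadedge} and~\ref{v3}, no $v_i$ is adjacent to any of the relevant $p$-vertices (nor to the far $v_s$-side vertices, which would give a chord), so no colour is deleted from their lists, whence $C$ is already a bad $4$-cycle of $(G,P,L)$ — contradicting that $(G,P,L)$ is valid. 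This finishes the proof.

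I expect the hard part to be the sub-case flagged in the second paragraph: $y_1$ (and possibly $y_3$) interior in $G$ joined to one $v_a$, while $y_2$ is interior joined to a different $v_b$, so that a short cycle on $\{v_a,v_b,y_1,y_2\}$ bounds a face of $G$. No $2$-chord or semi-fan argument applies there; instead one must extract a contradiction from the forced local degrees, the independence of $S_{G,L}$, the list-identities $L(v_1)=L(v_2)\cup L(p_1)$ and $L(v_2)\subseteq L(v_3)$, the dichotomy $s=3$ or ($s\ge5$ and $|L(v_4)|=2$), and the carefully chosen $c_1,c_2,c_3$, which are designed to obstruct exactly the colour deletions one would need. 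None of the individual checks is deep, but they are numerous and must be done exhaustively; the same remark applies to the several boundary placements dispatched in the third paragraph.
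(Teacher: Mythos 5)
Your plan is to rule out the bad $4$-cycle by purely structural means: first force $y_1,y_2,y_3\in B(G)$, then locate the path $y_1y_2y_3$ on $B(G)$ and contradict either the list sizes at $v_4$ or the validity of $(G,P,L)$. The first step is exactly where the proposal breaks down, and it is not a fixable detail: the configuration in which each of $v_1,v_2,v_3$ has a neighbour on $C$, namely $E'=\{v_1y_1,v_2y_2,v_3y_3\}$ with all of $y_1,y_2,y_3$ interior in $G$, is fully consistent with every structural fact at your disposal. Concretely, take $v_1v_2y_2y_1$ and $v_2v_3y_3y_2$ to be faces, so that $v_1v_2v_3y_3y_4y_1$ is a $6$-cycle whose unique interior vertex is $y_2$ (as Lemma \ref{lemma6} permits), with $|L(y_1)|=|L(y_3)|=4$ and $|L(y_2)|=|L(y_4)|=3$; this violates no chord lemma, no separating-cycle lemma, no degree bound, and not the independence of $S_{G,L}$ (the two $3$-list interior vertices $y_2,y_4$ are opposite on $C$). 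So the ``delicate sub-case'' you flag and defer cannot be closed by the local analysis you list; in the paper this is precisely the case $|E'|=3$, and it is eliminated not by a structural contradiction but by actually producing an $L$-colouring of $G-E(P)$: one pre-colours $y_2$, analyses the sets $A$ and $B$ of troublesome common neighbours with $P$, pre-colours those and any residual bad vertices, splits the remaining graph into pieces $G''_j$, and applies the minimality of $G$ to each piece (with a separate direct colouring of the $4$-cycle when both $y_1p_2$ and $y_3p_4$ are edges). That colouring argument is the bulk of the paper's proof of this lemma and is entirely absent from your proposal.

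The parts of your sketch that do go through correspond to the easier cases of the paper: when at most two of $v_1,v_2,v_3$ meet $C$, the paper too uses Claim \ref{clm2}, the $2$-chord lemmas (Lemmas \ref{lemma1} and \ref{lemma-no(1,2)-2chord}) and the semi-fan lemma (Lemma \ref{lemma-semifan}) to get contradictions, much as you do. But because the main case is both misdiagnosed (you expect a local structural contradiction where none exists) and left unproved (``I would derive a contradiction\ldots''), the proposal has a genuine gap and does not establish the lemma.
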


The proof of this lemma is left to the next section.
Assume Lemma \ref{clm-no4cycle} is true, we continue with the proof of
Case 4.

If $(G', P, L')$ has no bad vertex, then $(G', P, L')$ is a valid target and hence there is an $L'$-colouring of $G'-E(P)$, which extends to an $L$-colouring of $G-E(P)$.

Assume  $(G', P, L')$ has bad vertices $y_1,\ldots, y_q$ ($q \le 1$).     Then $|L(y_j)|=3$ and     $y_j$ is adjacent to one vertex in $\{v_1,v_2,v_3\}$ and one vertex in $p_j \in P$.  

If $y_j$ is adjacent to $v_2$ and $p_i$, then $W=p_jy_jv_2$ is a $(1,2)$-$2$-chord, contrary to Corollary \ref{cori22chord}.   By Claim \ref{clm-nobadedge}, if $y_j$ is adjacent to $v_1$, then $p_j=p_2$ and if $y_j$ is adjacent to $v_3$, then $p_j = p_3$. So $j \le 2$. For simplicity, we assume that $(G', P, L')$ has two bad vertices $y_1,y_2$.
 
We pre-colour $y_1, y_2$. Together with vertices in $P$, the pre-coloured vertices divide $G''$ into three parts: $G''_1,  G''_2, G''_3$. Let $P''_j$ be the pre-coloured vertices on the boundary of $G''_j$, and let $L''_j$ be the restriction of $L'$ to $G''_j$, except that vertices of $P''_j$ are pre-coloured. Then $(G''_j, P''_j, L''_j)$ is a   target for $j=1,2,3$. As $(G',P,L')$ has no bad $4$-cycle, each $(G''_j, P''_j, L''_j)$ has no bad $4$-cycle. By using the fact that $X_{G,L}$ is an independent set, it is easy to see that $(G''_j, P''_j, L''_j)$ has no bad vertex, and  no bad edge. Hence each $G''_j - E(P''_j)$ has an $L''_j$-colouring. The union of these colourings form an $L'$-colouring of $G'$, which extends to an $L$-colouring of $G$.


 \section{Proof of Lemma \ref{clm-no4cycle}}

  	Assume $(G', P, L')$ has a bad $4$-cycle $C=y_1y_2y_3y_4$  such that
  	\begin{itemize}
  		\item  $|L'(y_1)|=|L'(y_3)|=|L'(y_4)|=3$ and $|L'(y_2)|=2$.
  		\item  $y_4$ is  an interior vertex of $G'$.
  		\item $y_1,y_2,y_3$ are consecutive on  $B(G')$.
  	\end{itemize}
  	
  	Let $E' = E[\{v_1,v_2,v_3\}, \{y_1, y_2, y_3\}]$ be the set of edges between $\{v_1,v_2,v_3\}$ and $ \{y_1, y_2, y_3\}$.
  	
  	 Since $G$ has no bad $4$-cycle,  $|E'| \ne \emptyset$.  Since $G$ has no separating $4$-cycle or $5$-cycle, each $v_i$ is adjacent to at most one of the $y_j$'s. Hence $|E'| \le 3$.
  	
 If $|E'|=1$, then it  follows from Claim \ref{clm2} that $E'=\{v_3y_1\}$ and $v_3y_1$ is a boundary edge of $G$, i.e., $y_1=v_4$ and $|L(v_4)| =4$, contrary to our assumption.

 If $E'=\{v_1y_i, v_2y_j\}$, then since $G$ has no separating $5$-cycle,   $y_iy_j \in E(G)$.
  	So either $E'=\{v_1y_1, v_2y_2\}$ or $E'=\{v_1y_2, v_2y_3\}$.
  	In the former case, $y_3$ is a boundary vertex of $G$ (as $X_{G,L}$ is an independent set and $y_3y_4$ is an edge). Hence $v_2y_2y_3$ is a $(2,3)$-$2$-chord, a contradiction.
  		In the latter case, $y_1$ is a boundary vertex of $G$. Hence $v_1 y_2y_1$ is a $(3,3)$-$2$-chord. By Lemma \ref{lemma-semifan}, $G_{W,2}$ is a semi-fan. However, $y_4$ is an interior vertex  of $G_{W,2}$, a contradiction.
  	
   The case $E'=\{v_3y_i, v_2y_j\}$ is symmetric to the case that $E'=\{v_1y_i, v_2y_j\}$.
  	
  Assume $E'=\{v_1y_i, v_3y_j\}$.

    If $y_iy_j $ is not an edge, then $i=1, j=3$.
    Now $C=v_1v_2v_3y_3y_4y_1$ is a separating $6$-cycle. Hence $y_2$ is the unique interior vertex of $C$ and $y_2v_2$ is an edge, contrary to the assumption that $|E'|=2$.

    Assume $y_iy_j \in E(G)$, say $E'=\{v_1y_1, v_3y_2\}$.

    Then $W=v_3y_2y_3$  is a $(3^+,3)$-$2$-chord.
    By  Lemma \ref{lemma-semifan}, $G_{W,2}$ is a semi-fan with center $y_2$, and with $G_{W,2} - \{y_2\}$ be a path $x_1x_2 \ldots x_{2q+1}$ with $x_1=v_3, x_{2q+1}=y_3$.

     Since $v_1v_2v_3y_2y_1$ is a face, we conclude that $d_G(v_3) = 3$. Hence $|L(v_3)|=3$, and $v_3v_4v_4y_2$ is a bad $4$-cycle in $G$, a contradiction.

    In the remainder of this section, we assume that $|E'|=3$. Hence
     $$E'=\{v_1y_1, v_2y_2,v_3y_3\}.$$

   Note that  $C$ is the unique bad $4$-cycle of $(G',P,L')$. For otherwise, let  $C'=z_1z_2z_3z_4$ be another   bad $4$-cycle of $(G',P,L')$. Then each of $v_1, v_2, v_3$ must have a neighbour in  $C'$ as well as a neighbour in $C$. This is impossible, as $v_1, v_2, v_3$ are consecutive boundary vertices of $G$.

    We properly colour $y_2$ with colour $\alpha$, let $G''=G'-\{ y_2\}$ and let $L''$ be the restriction of $L'$ to $G''$ except that the colour $\alpha$  is removed from the lists of the neighbours of $y_2$. Note that $|L(y_1)|=|L(y_3)|=4$, and $|L''(y_1)|, |L''(y_3)| \ge 2$.

    We shall prove that $G''$ has an $L''$-colouring which extends to an $L$-colouring of $G$.

   It is easy to show that  $(G'', P, L'')$ has no bad $4$-cycle and no bad edge. However, $(G'', P, L'')$ may have  bad vertices.

  If $y$ is a bad vertex of $(G'', P, L'')$, then by Claim \ref{clm-badedge},   $y$ is adjacent to $v_1$ or $v_3$. If $y$ is adjacent to $v_1$, then $y$ is adjacent to $p_2$. By Lemma \ref{lemma-13chord}, if $y$ is adjacent to $v_3$, then $y$ is adjacent to $p_3$, unless $y=y_3$, in which case, $y$ might be adjacent to $p_4$. So $(G'', P, L'')$ has at most two  bad vertices. 
  
 Assume   $(G'', P, L'')$ has two bad vertices $y,y'$, with $y$ adjacent to $v_1$ and $p_2$,   $y'$ adjacent to $v_3$ and $p_3$.

  We  pre-colour $y,y'$, and divide  $G''$ into three  parts  $G''_1, G''_2,  G''_3$, with
   $P''_j=p_1p_2y, P''_2= yp_2p_3y'$ and $P''_3=y'p_3p_4p_5$.   It is easy to see that each $(G''_j, P''_j, L''_j)$ has no bad $4$-cycle and no bad edge. Also $(G''_1, P''_1, L''_1)$ has no bad vertex.

  If $z$ is a bad vertex in $(G''_2, P''_2, L''_2)$, then $|L(z)|=3$ and $z$ is adjacent to $y$ and $p_3$, or $z$ is adjacent to $y'$ and $p_2$.
  As $X_{G,L}$ is an independent set, in the former case, $|L(y)|=4$ and hence $y=y_1$, and in the latter case, $|L(y')|=4$ and hence $y'=y_3$. Note that we cannot have both $y=y_1$ and $y'=y_3$, for otherwise, $y_1y_2y_3p_3p_2$ is a separating 5-cycle, a contradiction. 
  
Assume $z$ is adjacent to $y=y_1$ and $p_3$. We pre-colour $z$ and replace $G''_2$ with $G''_2$ and with $P''_2=yzp_3y'$. Now   $(G'''_2, P'''_2, L'''_2)$ is a valid target. Since $y' \ne y_3$, $(G''_3, P''_3, L''_3)$ has no bad vertex and hence is a valid target.  So $G''_j-E(P''_j)$ has a proper $L''_j$-colouring for $j=1,2,3$, and their union is a proper $L$-colouring of $G-E(P)$.  The other cases are proved similarly, and the details are omitted.

\section*{Acknowledgement}

This is a revision on paper [J. Huand X. Zhu, 
List coloring triangle-free planar graphs. J. Graph Theory94(2020), no.2, 278–298.] 
We thank  Yueyue Zhao and Yiting Jiang from Nanjing Normal University for pointing out that our original proofs of Lemmas 8 and 9 (corresponding to Lemmas 11 and 12 in this revised version) omitted the case that the bad vertex $y$ might be a boundary vertex. To handle this case, we added some new lemmas. We changed the definitions of bad vertex and bad edge to improve the readability of the paper. Also there is an error in the proof of Case 4 of Theorem 5: the bad 4-cycle could be $v^*v_4v_5w'$ for some $w' \ne w$. In this revised version, the proof is re-written.

 \bibliographystyle{unsrt}

\end{document}